\documentclass[a4paper,12pt,oneside]{amsart}       

\usepackage[british,english]{babel} 
\usepackage{amsmath,amssymb,amsthm,amscd}   
\usepackage{mathrsfs}
\usepackage[all]{xy}
\usepackage{stmaryrd}

\theoremstyle{cited}
\newtheorem{teor}{Theorem}[section]
\newtheorem{lem}[teor]{Lemma}
\newtheorem{cor}[teor]{Corollary}
\newtheorem{prop}[teor]{Proposition}

\theoremstyle{definition}
\newtheorem{deft}[teor]{Definition}

\theoremstyle{remark}
\newtheorem{oss}[teor]{Remark}

\newcommand{\C}{\mathbb{C}}
\newcommand{\R}{\mathbb{R}}
\newcommand{\Z}{\mathbb{Z}}
\newcommand{\N}{\mathbb{N}}

\title{The $\omega$-Borel invariant for representations into $SL(n,\C_\omega)$}

\author{Alessio Savini}

\begin{document}

\maketitle

\begin{abstract}
Let $\Gamma$ be the fundamental group of a complete hyperbolic $3$-manifold $M$ with toric cusps. By following~\cite{iozzi:articolo} we define the $\omega$-Borel invariant $\beta_n^\omega(\rho_\omega)$ associated to a representation $\rho_\omega: \Gamma \rightarrow SL(n,\C_\omega)$, where $\C_\omega$ is a field introduced by~\cite{parreau:articolo} which can be constructed as a quotient of a suitable subset of $\C^\N$ with the data of a non-principal ultrafilter $\omega$ on $\N$ and a real divergent sequence $\lambda_l$ such that $\lambda_l \geq 1$. \\
\indent Since a sequence of $\omega$-bounded representations $\rho_l$ into $SL(n,\C)$ determines a representation $\rho_\omega$ into $SL(n,\C_\omega)$, for $n=2$ we study the relation between the invariant $\beta^\omega_2(\rho_\omega)$ and the sequence of Borel invariants $\beta_2(\rho_l)$. We conclude by showing that if a sequence of representations $\rho_l:\Gamma \rightarrow SL(2,\C)$ induces a representation $\rho_\omega:\Gamma \rightarrow SL(2,\C_\omega)$ which determines a reducible action on the asymptotic cone $C_\omega(\mathbb{H}^3,d/\lambda_l,O)$ with non-trivial length function, then it holds $\beta^\omega_2(\rho_\omega)=0$.
\end{abstract}

\vspace{20pt}


\section{Introduction}

Given a finitely generated group $\Gamma$, the character variety $X(\Gamma,SL(n,\C))$ is an algebraic variety obtained as $GIT$-quotient of the representation variety $R(\Gamma,SL(n,\C))$ by the conjugation action of $SL(n,\C)$. When $\Gamma$ is the fundamental group of a complete hyperbolic $3$-dimensional manifold $M$ with toric cusps, it is possible to attach to every equivalence class of representations a suitable invariant called Borel invariant. Indeed, in~\cite{iozzi:articolo} the authors prove that the Borel class $\beta(n)$, already introduced and studied in~\cite{goncharov:articolo}, is a generator for the cohomology group $H^3_{cb}(PSL(n,\C))$. Thus, given a representation $\rho:\Gamma \rightarrow PSL(n,\C)$, we can construct a class into $H_b^3(\Gamma)$ by pulling back $\beta(n)$ along $\rho_b^*$ and then evaluate this new class on a fundamental class $[N,\partial N] \in H^3(N,\partial N)$. Here $N$ is a compact core of $M$. When $n=2$ this invariant is exactly the volume of the representation defined as the integral of the pullback of the standard volume form $\omega_{\mathbb{H}^3}$ along any pseudo-developing map $D$, as written both in~\cite{dunfield:articolo} and in~\cite{franc04:articolo} (see for instance~\cite{kim:articolo} for a proof of the equivalence). The Borel invariant of a representation $\rho:\Gamma \rightarrow SL(n,\C)$ will be the Borel invariant of the induced representation into $PSL(n,\C)$. Moreover, since this invariant remains unchanged under conjugation, we have a well-defined function on the character variety $X(\Gamma,SL(n,\C))$, called Borel function, which is continuous with respect to the topology of the pointwise convergence.\\
\indent Inspired by the work of Thurston about the compactification of the Teichmuller space for a closed surface of genus $g$ exposed in~\cite{thurston:articolo} and generalizing the constructions for algebraic curves appeared in~\cite{culler:articolo}, in ~\cite{morgan:articolo} J. Morgan and P. Shalen proposed a new way to compactify a generic algebraic variety $V$ given a generating set $\mathcal{F}$ for the algebra of regular functions $\C[V]$. This particular method applied to the character variety $X(\Gamma,SL(2,\C))$ allows to interpret the ideal points of the compactification as projective length functions of isometric $\Gamma$-actions on real trees which are constructed as Bass--Serre trees associated to $SL(2,\mathbb{K}_v)$, where $\mathbb{K}_v$ is a suitable valued field (see~\cite{serre:libro}). A more geometric approach based on Gromov--Hausdorff convergence was suggested by both~\cite{bestvina:articolo} and~\cite{paulin:articolo}. Lately~\cite{parreau:articolo} extended this intepretation to the more general case of $X(\Gamma,SL(n,\C))$ by viewing an ideal point as a projective vectorial length function relative to an isometric action, this time on a Euclidean building of type $A_{n-1}$. The method suggested by~\cite{parreau:articolo} to obtain the Euclidean building and its isometric $\Gamma$-action is based on asymptotic cones and it reminds the ones already exposed both in~\cite{bestvina:articolo} and in~\cite{paulin:articolo}.\\
\indent In the attempt to link all these ideas, one could naturally ask if it is possible to extend continuously the Borel function to the ideal points of the compactification of $X(\Gamma,SL(n,\C))$. Going further, one could be interested in studying the possible values attained at ideal points and trying to formulate a rigidity result, which would generalize~\cite[Theorem 1]{iozzi:articolo}.\\
\indent The aim of this paper is to make a small step towards this direction by defining a numerical invariant, the $\omega$-Borel invariant, associated to a representation $\rho_\omega:\Gamma \rightarrow SL(n,\C_\omega)$, where $\C_\omega$ is a field obtained as a quotient of a suitable subset of $\C^\N$ by an equivalence relation which depends on a non-principal ultrafilter $\omega$ on $\N$ and a real divergent sequence $\lambda_l$ with $\lambda_l \geq 1$. The motivation of this definition relies on the interpretation of the limit action of $\Gamma$ on the Euclidean bulding of type $A_{n-1}$ as a representation $\rho_\omega: \Gamma \rightarrow SL(n,\C_\omega)$, as proved in~\cite[Theorem 5.2]{parreau:articolo}.\\
\indent The first section is dedicated to preliminary definitions, in particular we recall the definition of the field $\C_\omega$ and the notion of bounded cohomology of locally compact groups. In the second section we give the definition of the $\omega$-Borel cohomology class $\beta^\omega(n)$ which will be an element of $H^3_{b}(SL^\delta(n,\C_\omega))$. In the last section we define the $\omega$-Borel invariant $\beta^\omega_n(\rho_\omega)$ for a representation $\rho_\omega:\Gamma \rightarrow SL(n,\C_\omega)$ and we describe some of its properties. In particular we focus our attention on the case $n=2$. We show that given a sequence of representations $\rho_l:\Gamma \rightarrow SL(2,\C)$ which diverges to an ideal point to the character variety and such that the induced representation $\rho_\omega:\Gamma \rightarrow SL(2,\C_\omega)$ determines a reducible action on the asymptotic cone $C_\omega(\mathbb{H}^3,d_{\mathbb{H}^3}/\lambda_l,O)$, then it must holds $\beta^\omega_2(\rho_\omega)=0$.\\
\\
\textbf{Acknowledgements}: I would like to thank both Prof. Alessandra Iozzi and Prof. Marc Burger for the enlightening discussions and the help they gave me during my visiting period at ETH.


\section{Preliminary definitions}

\subsection{The field $\C_\omega$}

For more details regarding the definitions and the results contained in this section we refer to~\cite[Section 3.3]{parreau:articolo}. We start by recalling the notion of ultrafilter and some fundamental properties that we are going to exploit lately.

\begin{deft}
An \textit{ultrafilter} $\omega$ on a set $X$  is a family of subsets of $X$ which satisfies:

\begin{itemize}
	\item The empty set is not contained in $\omega$, that is $\varnothing \notin \omega$.
	\item If $A \subset B$ and $A \in \omega$, then $B \in \omega$.
	\item Given a collection $A_1, \ldots A_n$ such that $A_i \in \omega$ for every $i=1,\ldots,n$, then $A_1 \cap \ldots \cap A_n \in \omega$.
	\item Given $A_1,\ldots A_n$ such that $A_1 \sqcup \ldots \sqcup A_n= X$, there exists exactly one $i_0 \in \{1,\ldots,n\}$ so that $A_{i_0} \in \omega$. 
\end{itemize}

An ultrafilter is \textit{principal} and centered at $x \in X$ if for every set $A \in \omega$ it holds $x \in A$. Otherwise we say that the ultrafilter is \textit{non-principal}.
\end{deft}

The importance of ultrafilters relies on their power to force convergence of sequences of points in a topological space $X$ by selecting a suitable limit point. For the sake of clarity we first need to introduce the following  

\begin{deft}
Let $X$ be a topological space and let $(x_k)_{k \in \N}$ be a sequence of points in $X$. Fix an ultrafilter $\omega$ on the set of natural numbers $\N$. We say that the sequence $\omega$-\textit{converges} to $x_0$ if for every open neighborhood $U$ of $x_0$ we have $\{k \in \N: x_k \in U\} \in \omega$.  
\end{deft}

A priori a sequence may admit no limit or several limits if the topology of the space $X$ does not have good properties. To guarantee the existence and the uniqueness of the limit we need a compact Hausdorff space. Indeed, it holds
\begin{prop}\label{existence}
Let $X$ be a topological space which is compact and Hausdorff. Then, for any ultrafilter $\omega$ on $\N$ and any sequence $(x_k)_{k \in \N}$ of points in $X$, there exists a unique point $x_0 \in X$ such that

\[
\textup{$\omega$-}\lim_{k \to \infty} x_k = x_0.
\]
\end{prop}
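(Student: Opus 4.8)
The plan is to establish existence via compactness and uniqueness via the Hausdorff property, using the partition axiom of the ultrafilter as the main bridge between the combinatorics of $\omega$ and the topology of $X$. The crucial preliminary observation I would extract from the definition is that, applying the last axiom to the two-element partition $A \sqcup (\N \setminus A) = \N$, exactly one of any subset $A \subseteq \N$ and its complement lies in $\omega$; in particular $A \notin \omega$ forces $\N \setminus A \in \omega$. This reformulation is what makes the whole argument work.

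For existence, I would argue by contradiction. Suppose no point of $X$ is an $\omega$-limit of $(x_k)_{k \in \N}$. Then for each $x \in X$ there is an open neighbourhood $U_x$ with $S_x := \{k \in \N : x_k \in U_x\} \notin \omega$, hence $\N \setminus S_x = \{k \in \N : x_k \notin U_x\} \in \omega$. The family $\{U_x\}_{x \in X}$ is an open cover of $X$, so by compactness finitely many $U_{x_1}, \ldots, U_{x_m}$ already cover $X$. The finite-intersection axiom then gives $\bigcap_{i=1}^m (\N \setminus S_{x_i}) \in \omega$; but a natural number $k$ lies in this intersection exactly when $x_k \notin U_{x_i}$ for every $i$, i.e. when $x_k \notin \bigcup_{i=1}^m U_{x_i} = X$, which is impossible. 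Hence the intersection equals $\varnothing$, contradicting $\varnothing \notin \omega$.

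For uniqueness, I would again proceed by contradiction. If $x_0 \neq x_0'$ were two distinct $\omega$-limits, the Hausdorff property supplies disjoint open neighbourhoods $U \ni x_0$ and $V \ni x_0'$. By the definition of $\omega$-convergence both $\{k \in \N : x_k \in U\}$ and $\{k \in \N : x_k \in V\}$ belong to $\omega$, so by the intersection axiom their intersection does as well; but since $U \cap V = \varnothing$ this intersection is the empty set, again contradicting $\varnothing \notin \omega$.

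The argument is essentially formal once the right reformulation of the axioms is in place, and I expect no serious technical difficulty. The only genuinely delicate point is the use of compactness to pass from the pointwise choice of neighbourhoods to a finite subcover: this is precisely what converts the infinitely many memberships $\N \setminus S_x \in \omega$ into a single finite intersection to which the ultrafilter axioms can be applied, so that both hypotheses on $X$ enter in an essential and complementary way.
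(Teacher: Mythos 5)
Your proof is correct and complete: the reformulation of the partition axiom (exactly one of $A$ and $\N \setminus A$ lies in $\omega$), compactness plus the finite-intersection axiom for existence, and Hausdorffness plus disjoint neighbourhoods for uniqueness are precisely the needed ingredients, and both contradictions are carried out soundly. Note that the paper itself gives no proof of this proposition at all --- it is stated as a known fact, with the reader referred to Section 3.3 of Parreau's article --- so your argument, which is the standard one, simply fills in what the paper omits. One cosmetic remark: you denote the points of the finite subcover by $x_1, \ldots, x_m$, which collides with the first $m$ terms of the sequence $(x_k)_{k \in \N}$; renaming them $y_1, \ldots, y_m$ would remove the ambiguity.
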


Another remarkable property of ultrafilters is the compatibility with continuous functions between topological spaces. 

\begin{prop}\label{continuous}
Let $f:X \rightarrow Y$ be a continuous function between two compact Hausdorff spaces. Let $\omega$ be an ultrafilter on $\N$. For any sequence $(x_k)_{k \in \N}$ of points in $X$ we have

\[
\textup{$\omega$-}\lim_{k \to \infty} f(x_k)=f(\textup{$\omega$-}\lim_{k \to \infty} x_k).
\]
\end{prop}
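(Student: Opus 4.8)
The plan is to reduce the statement to a direct verification of the definition of $\omega$-convergence, once both limits are known to exist. First I would invoke Proposition~\ref{existence}: since $X$ and $Y$ are compact and Hausdorff, the sequence $(x_k)_{k \in \N}$ admits a unique $\omega$-limit $x_0 \in X$, and the image sequence $(f(x_k))_{k \in \N}$ admits a unique $\omega$-limit $y_0 \in Y$. The content of the proposition is then the identity $y_0 = f(x_0)$, and by the uniqueness clause of Proposition~\ref{existence} it suffices to show that $f(x_0)$ itself satisfies the defining property of the $\omega$-limit of $(f(x_k))_{k \in \N}$.

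To this end I would fix an arbitrary open neighborhood $V$ of $f(x_0)$ in $Y$ and aim to prove that $\{k \in \N : f(x_k) \in V\} \in \omega$. Continuity of $f$ enters precisely here: the preimage $U = f^{-1}(V)$ is an open neighborhood of $x_0$ in $X$. Because $x_0$ is by definition the $\omega$-limit of $(x_k)_{k \in \N}$, we obtain $\{k \in \N : x_k \in U\} \in \omega$. The elementary but essential observation is the inclusion
\[
\{k \in \N : x_k \in U\} \subseteq \{k \in \N : f(x_k) \in V\},
\]
which holds because $x_k \in U = f^{-1}(V)$ forces $f(x_k) \in V$. Invoking the upward-closure axiom of the ultrafilter (if $A \subset B$ and $A \in \omega$, then $B \in \omega$) then yields $\{k \in \N : f(x_k) \in V\} \in \omega$, as required. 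Since $V$ was an arbitrary neighborhood of $f(x_0)$, this shows that $f(x_0)$ is an $\omega$-limit of $(f(x_k))_{k \in \N}$, and uniqueness from Proposition~\ref{existence} forces $y_0 = f(x_0)$.

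I do not expect any genuine obstacle in this argument; the only points demanding care are conceptual rather than technical. First, compactness and the Hausdorff property are used only through Proposition~\ref{existence}, to guarantee that the two limits exist and are unique — the identity $y_0 = f(x_0)$ itself follows purely from continuity together with the upward-closure of $\omega$. Second, one must resist the temptation to argue by ordinary sequential continuity, since $\omega$-convergence need not coincide with convergence in the usual sense; the correct mechanism is entirely the neighborhood-pullback combined with the ultrafilter axiom, exactly as above.
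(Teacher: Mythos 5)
Your proof is correct. Note that the paper itself does not prove this proposition at all: it is stated as a known fact, with the reader referred to Section 3.3 of Parreau's article for this background material, so there is no internal argument to compare against. Your argument is the standard one and fills that gap exactly as expected: uniqueness of $\omega$-limits in compact Hausdorff spaces (Proposition~\ref{existence}) reduces the statement to checking that $f(x_0)$ satisfies the defining neighborhood condition, and the pullback of neighborhoods under $f$ together with the upward-closure axiom of the ultrafilter does precisely that. Your closing remarks are also well taken: compactness and Hausdorffness enter only through existence and uniqueness of the limits, and the mechanism is the neighborhood pullback rather than any appeal to sequential continuity.
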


We are now ready to describe the construction of the field $\C_\omega$. Let $\omega$ be a non-principal ultrafilter on $\N$ and let $(\lambda_k)_{k \in \N}$ be a real sequence that diverges to infinity and such that $\lambda_k \geq 1$ for every $k$. We define

\[
\C_\omega = \{ (a_k) \in \C^\N | \exists C > 0,\forall k \hspace{5pt} |a_k|^\frac{1}{\lambda_k} < C  \}/\sim_\omega
\]
where $(a_k)_{k \in \N} \sim_\omega (b_k)_{k \in \N}$ if and only if $\omega$-$\lim_{k \to \infty} |a_k - b_k|^\frac{1}{\lambda_k}=0$. It is easy to verify that the operations of pointwise sum and pointwise multiplication defined over $\C^\N$ are compatible with the equivalence relation $\sim_\omega$. Thus they define two operations of sum and multiplication over $\C_\omega$, which make $\C_\omega$ a field. 
There is a natural field embedding of $\C$ into $\C_\omega$ given by the constant sequences. \\
\indent If we denote by $a_\omega$ the equivalence class $[(a_k)]$ of the sequence $(a_k)_{k \in \N}$, the function

$$
|a_\omega|^\omega:=\text{$\omega$-$\lim_{k \to \infty} |a_k|^\frac{1}{\lambda_k}$}
$$
is an ultrametric absolute value on $\C_\omega$, that is it satisfies 

\[
|a_\omega + b_\omega|^\omega \leq \max\{|a_\omega|^\omega,|b_\omega|^\omega\}
\] 
for every pair $a_\omega, b_\omega \in \C_\omega$. It is worth noticing the elements of $\C$, seen as the subfield of constant sequences, have all norm equal to $1$. 

\begin{deft}
The ultrametric field $(\C_\omega,|\cdot|^\omega)$ is called the \textit{asymptotic cone} of $(\C,|\cdot|)$ with respect to the scaling sequence $(\lambda_k)_{k \in \N}$ and the ultrafilter $\omega$.
\end{deft}

If we consider the distance induced by the absolute value $|\cdot|^\omega$ and we endow $\C_\omega$ with the metric topology, we obtain a topological field which is complete (see~\cite[Remark 3.10]{parreau:articolo}), but it is not locally compact.

\begin{prop}\label{locally}
The field $\C_\omega$ is not locally compact with respect to the metric topology induced by the absolute value $|\cdot|^\omega$. 
\end{prop}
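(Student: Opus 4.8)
The plan is to reduce the statement to the non-compactness of a single closed ball and then to exhibit an explicit obstruction to compactness. Since $(\C_\omega,+)$ is a topological group and multiplication by a fixed nonzero element is a homeomorphism, local compactness at any point is equivalent to local compactness at $0$. A compact neighbourhood of $0$ would contain a closed ball $\bar B(0,r)=\{a_\omega:|a_\omega|^\omega\le r\}$, which is closed and hence would itself be compact. So it suffices to produce one closed ball that fails to be compact.

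Next I would scale to the unit ball. As already observed in the construction, for any $t>0$ the class of the sequence $a_k=t^{\lambda_k}$ satisfies $|a_\omega|^\omega=t$, so the value group $|\C_\omega^\times|^\omega$ is all of $\R_{>0}$. Choosing $u$ with $|u|^\omega=1/r$, multiplication by $u$ is a homeomorphism carrying $\bar B(0,r)$ bijectively onto the closed unit ball $\bar B(0,1)$, using multiplicativity $|u\,a_\omega|^\omega=|u|^\omega|a_\omega|^\omega$. Hence it is enough to show that $\bar B(0,1)$ is not compact.

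The heart of the argument is to exhibit an infinite subset of $\bar B(0,1)$ whose points are pairwise at distance $1$. For this I would use the embedding $\C\hookrightarrow\C_\omega$ by constant sequences: every nonzero constant has $\omega$-norm equal to $1$, as noted above, because for fixed $c\neq 0$ one has $|c|^{1/\lambda_k}\to 1$ and so the $\omega$-limit equals $1$. Consequently the images $x_n$ of the integers $n\in\N$ all lie in $\bar B(0,1)$, and for $m\neq n$ the difference $x_m-x_n$ is again a nonzero constant, whence $|x_m-x_n|^\omega=1$. This is an infinite $1$-separated family, so no subsequence of $(x_n)$ is Cauchy; in a metric space this contradicts the sequential compactness of $\bar B(0,1)$, completing the reduction above.

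The main obstacle is conceptual rather than computational: one must recognise that the ultralimit collapses the ordinary archimedean absolute value of $\C$, making distinct constants equidistant at distance $1$. Equivalently, the residue field of $(\C_\omega,|\cdot|^\omega)$ is infinite, since it contains a copy of $\C$, and the value group is the whole of $\R_{>0}$ rather than a discrete subgroup; these are exactly the two features that rule out local compactness for a complete ultrametric field. Once the equidistant family is in hand, the remaining homogeneity and metric-space arguments are routine.
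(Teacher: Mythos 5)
Your proof is correct and takes essentially the same route as the paper: both exhibit the integers, embedded as constant sequences, as an infinite $1$-separated subset of the closed unit ball $\overline{B}_1(0)$, so that no subsequence can converge and the ball fails to be sequentially compact. The only difference is that you justify the reduction to the unit ball more carefully (via translation homogeneity and scaling by elements of norm $1/r$, using that the value group is all of $\R_{>0}$), a step the paper simply asserts from the normed-space structure.
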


\begin{proof}
Since $\C_\omega$ is a normed space, local compactness can be checked by verifying the compactness of the unit closed ball. Hence, it suffices to show that the closed ball

\[
\overline{B}_1(0):=\{ a_\omega \in \C_\omega| |a_\omega|^\omega \leq 1 \}
\]
is not compact. We are going to show that it is not sequentially compact. Consider the sequence $(n)_{n \in \N}$ where each element $n$ has to be thought of as an element of $\C_\omega$ thanks to the standard embedding given by constant sequences. Given two different elements $n$ and $m$ it is clear that their distance in $\C_\omega$ is always equal to $1$, indeed

\[
|n-m|^\omega=\textup{$\omega$-}\lim_{k \to \infty} |n-m|^\frac{1}{\lambda_k}=1.
\]

Hence it cannot exist a subsequence of $(n)_{n \in \N}$ which converges, as desired. 
\end{proof}

The construction exposed above can be repeated, rather than for a field, for every $m$-dimensional normed vector space $(V, || \cdot ||)$ over $\C$. More precisely, we define 

\[
V_\omega:=\{ (v_k) \in V^\N| \exists C >0, \forall k \hspace{5pt} ||v_k||^\frac{1}{\lambda_k} < C\}/ \sim_\omega,
\]
where $(v_k)_{k \in \N}$ and $(u_k)_{k \in \N}$ are equivalent if and only if $\omega$-$\lim_{k \to \infty} || u_k - v_k ||^\frac{1}{\lambda_k}=0$. Let $v_\omega$ be the equivalence class determined by $(v_k)_{k \in \N}$. It is possible to endow $V_\omega$ with a structure of $m$-dimensional $\C_\omega$-vector space by considering the operations induced by pointwise sum and by pointwise scalar multiplication. As before, we have a well-defined norm $||\cdot||^\omega$ given by

\[
||v_\omega||^\omega:=\text{$\omega$-$\lim_{k \to \infty} ||v_k||^\frac{1}{\lambda_k}$}.
\]

\begin{deft}
The $\C_\omega$-vector space $(V_\omega,||\cdot||^\omega)$ is the \textit{asymptotic cone} of the vector space $(V,||\cdot||)$ with respect to the scaling sequence $(\lambda_k)_{k \in \N}$ and the ultrafilter $\omega$. 
\end{deft}

We now focus our attention on the set of complex square matrices of order $n$, namely $M(n,\C)$. If we choose as norm over $M(n,\C)$ the standard matrix norm, we can apply the construction above to the normed vector space $(M(n,\C),||\cdot||)$. In this particular case we are able to enrich the structure of $M(n,\C)_\omega$ by considering a multiplication. Indeed, the classic multiplication rows-by-columns is compatible with $\sim_\omega$ and hence it defines a structure of $\C_\omega$-algebra on $M(n,\C)_\omega$.

\begin{deft}
The normed algebra $(M(n,\C)_\omega,||\cdot||^\omega)$ is called the \textit{asymptotic cone} of the algebra $(M(n,\C),||\cdot||)$ with respect to the scaling sequence $(\lambda_k)_{k \in \N}$ and the ultrafilter $\omega$. 
\end{deft}

\begin{deft}
A sequence $(g_k) \in GL(n,\C)^\N$ is $\omega$-\textit{bounded} if

\[
\exists C>0: \forall k \hspace{5pt} ||g_k||^\frac{1}{\lambda_k},||g_k^{-1}||^\frac{1}{\lambda_k} <C.
\]

The previous condition implies that the sequence $(g_k)_{k \in \N}$ defines an element of $M(n,\C)_\omega$ which admits a multiplicative inverse. We denote by $GL(n,\C)_\omega$ the set of all the invertible elements of $M(n,\C)_\omega$. This is a group with respect to the multiplication rows-by-columns. We denote by $SL(n,\C)_\omega$ the subgroup

\[
SL(n,\C)_\omega:=\{ g_\omega \in GL(n,\C)_\omega| \exists (g_k)_{k \in \N} \in g_\omega: \forall k \hspace{5pt} \det(g_k)=1 \}.
\]
\end{deft}

Since we can also consider the normed algebra $(M(n,\C_\omega),||\cdot||_\infty)$, where $||\cdot||_\infty$ is the standard supremum norm with respect to $|\cdot|^\omega$, it is natural to ask whether this algebra is isomorphic to $M(n,\C)_\omega$ as normed algebra. The answer is given by~\cite[Corollary 3.18]{parreau:articolo}, which states that there is a natural isomorphism as normed $\C_\omega$-algebras between $M(n,\C)_\omega$ and $M(n,\C_\omega)$. Moreover this isomorphism induces an isomorphism of groups between $SL(n,\C)_\omega$ and $SL(n,\C_\omega)$. \\
\indent We conclude this section by introducing the space $\mathbb{P}^1(\C)_\omega$. In order to do this, we first need to recall the construction of the asymptotic cone of $\mathbb{H}^3$. 

\begin{deft}

Let $(x_k)_{k \in \N}$ be a sequence of basepoints in $\mathbb{H}^3$. Consider the space
\[
C_\omega(\mathbb{H}^3,d/\lambda_k,x_k):=\{ (y_k) \in (\mathbb{H}^3)^\N|\exists C>0,\forall k \hspace{5pt} d(x_k,y_k)<C \lambda_k\}/\sim_\omega
\]
where $(y_k)_{k \in \N} \sim_\omega (y'_k)_{k \in \N}$ if and only if $\omega$-$\lim_{k \to \infty} d(y_k,y_k')/\lambda_k=0$. Denote by $y_\omega$ the equivalence class of the sequence $(y_k)_{k \in \N}$. If we define 

\[
d_\omega(y_\omega,y'_\omega)=\textup{$\omega$-}\lim_{k \to \infty}d(y_k,y_k')/\lambda_k
\]
we get a metric and the metric space $(C_\omega(\mathbb{H}^3,d/\lambda_k,x_k),d_\omega)$ is the \textit{asymptotic cone} with respect to the ultrafilter $\omega$, the scaling sequence $(\lambda_k)_{k \in \N}$ and the sequence of basepoints $(x_k)_{k \in \N}$.
\end{deft}

Assume to fix the origin $O$ of the Poincar\'e model of $\mathbb{H}^3$ as the constant sequence of basepoints for the asymptotic cone construction. It should be clear that there exists a natural surjection 

\[
\pi :\mathbb{P}^1(\C)^\N \rightarrow \partial_\infty C_\omega(\mathbb{H}^3,d/\lambda_k,O)
\]
defined as it follows. Thinking of $\mathbb{P}^1(\C)$ as the boundary at infinity of $\mathbb{H}^3$, a sequence of points $(\xi_k) \in \mathbb{P}^1(\C)^\N$ determines in a unique way a sequence of geodesic rays $(c_k)_{k \in \N}$ starting from $O$ and ending at $(\xi_k)_{k \in \N}$. These rays allows us to define a geodesic ray $c_\omega:[0,\infty) \rightarrow C_\omega(\mathbb{H}^3,d/\lambda_k,O)$ given by $c_\omega(t):=[c_k(\lambda_k t)]$. Hence, we can define $\pi((\xi_k)_{k \in \N}):=c_\omega(\infty)$. The space $\mathbb{P}^1(\C)_\omega$ will be the quotient of $\mathbb{P}^1(\C)^\N$ by the equivalence relation induced by the surjection $\pi$. In this way $\mathbb{P}^1(\C)_\omega$ is clearly identified with boundary at infinity of $C_\omega(\mathbb{H}^3,d/\lambda_k,O)$ and hence inherits in a natural way an action of $SL(2,\C)_\omega$ given by $[h_k].[\xi_k]:=[h_k.\xi_k]$. This action is well defined because the action of $SL(2,\C)_\omega$ on $C_\omega(\mathbb{H}^3,d/\lambda_k,O)$ is well defined (see~\cite[Proposition 3.20]{parreau:articolo}). Moreover, since the Bass--Serre tree $\Delta^{BS}(SL(2,\C_\omega))$ associated to $SL(2,\C_\omega)$ is naturally isometric to $C_\omega(\mathbb{H}^3,d/\lambda_k,O)$, as shown in~\cite[Proposition 3.21]{parreau:articolo}, the space $\mathbb{P}^1(\C)_\omega$ can be indentified also with $\mathbb{P}^1(\C_\omega)$ and this identification is compatible with the actions of $SL(2,\C)_\omega$ and $SL(2,\C_\omega)$, respectively. 

\subsection{Bounded cohomology of locally compact groups}\label{measurable}

From now until the end of this section we denote by $G$  a locally compact group. We endow $\R$ with the structure of a trivial normed $G$-module, where the considered norm is the standard Euclidean one. The space of bounded continuous functions is

\[
C_{cb}^n(G,\R):=C_{cb}(G^{n+1},\R)=\{ f: G^{n+1} \rightarrow \R | \text{$f$ is continuous and} \hspace{5pt} ||f||_\infty<\infty \}
\]
where the supremum norm is defined as

\[
||f||_\infty:=\sup_{g_0,\ldots,g_n \in G}|f(g_0,\ldots,g_n)|
\]
and $C^n_{cb}(G,\R)$ is endowed with the following $G$-module structure
\[
(g.f)(g_0,\ldots,g_n):=f(g^{-1}g_0,\ldots,g^{-1}g_n)
\]
for every element $g \in G$ and every function $f \in C^{n}_{cb}(G,\R)$ (here the notation $g.f$ stands for the action of the element $g$ on $f$). We denote by $\delta_n$ the homogeneous boundary operator of degree $n$, namely

\[
\delta_n:C^{n}_{cb}(G,\R) \rightarrow C^{n+1}_{cb}(G,\R),
\]
\[
\delta_n f(g_0,\ldots,g_{n+1})=\sum_{i=0}^{n+1} (-1)^i f(g_0,\ldots,\hat g_i, \ldots g_{n+1}),
\]
where the notation $\hat g_i$ indicates that the element $g_i$ has been omitted. 

There is a natural embedding of $\mathbb{R}$ into $C^0_{cb}(G,\R)$ given by the constant functions on $G$. This allows us to consider the following chain complex of $G$-modules

\[
\xymatrix{
0  \ar[r] & \R \ar[r] &  C^{0}_{cb}(G,\R) \ar[r]^{\delta_0} & C^{1}_{cb}(G,\R) \ar[r]^{\hspace{15pt} \delta_1} & \ldots
}
\]
and thanks to the compatibility of $\delta_n$ with respect to the $G$-action, we can consider the submodules of $G$-invariant vectors 

\[
\xymatrix{
0  \ar[r] &  C^{0}_{cb}(G,\R)^G \ar[r]^{\delta_0} & C^{1}_{cb}(G,\R)^G \ar[r]^{\delta_1} & C_{cb}^2(G,\R)^G \ar[r]^{\hspace{15pt} \delta_2} & \ldots
}
\]

Like in any other chain complex, we define the set of the $n^{th}$-\textit{bounded continuous cocycles} as

\[
Z^n_{cb}(G,\R)^G:=\text{ker}\Big(\delta_n:C^{n}_{cb}(G,\R)^G \rightarrow C^{n+1}_{cb}(G,\R)^G\Big)
\]
and the set of the $n^{th}$-\textit{bounded continuous coboundaries} 

\[
B^n_{cb}(G,\R)^G:=\text{im}\Big(\delta_{n-1}:C^{n-1}_{cb}(G,\R)^G \rightarrow C^{n}_{cb}(G,\R)^G \Big), \hspace{5pt} \text{and} \hspace{5pt} B^0_{cb}(G,\R):=0.
\]

\begin{deft}
The \textit{continuous bounded cohomology} in degree $n$ of $G$ with real coefficients is the space

\[
H^n_{cb}(G):=H^n_{cb}(G,\R)=\frac{Z^n_{cb}(G,\R)^G}{B^n_{cb}(G,\R)^G},
\]
with the quotient seminorm 

\[
||[f]||_\infty:=\inf ||f||_\infty,
\]
where the infimum is taken over all the possible representatives of $[f]$. 
\end{deft}

It is possible to gain information about the bounded cohomology of $G$ also by studying suitable spaces on which $G$ acts. More precisely, let $X$ be a measurable space on which $G$ acts measurably, that is the action map $\theta: G \times X \rightarrow X$ is measurable ($G$ is equipped with the $\sigma$-algebra of the Haar measurable sets). We set 

\[
\mathcal{B}^\infty(X^n,\R):=\{ f:X^n \rightarrow \R| f \hspace{3pt} \text{is measurable and} \hspace{3pt} \sup_{x \in X^n} |f(x)|< \infty \},
\]
 and we endow it with the structure of Banach $G$-module given by

\[
(g.f)(x_1,\ldots,x_n):=f(g^{-1}.x_1,\ldots,g^{-1}.x_n),
\]
for every $g \in G$ and every $f \in \mathcal{B}^\infty(X^n,\R)$. If \mbox{$\delta_n:\mathcal{B}^\infty(X^n,\R) \rightarrow \mathcal{B}^\infty(X^{n+1},\R)$} is the standard homogeneous coboundary operator, for $n\geq 1$ and $\delta_0:\R \rightarrow \mathcal{B}^\infty(X,\R)$  is the inclusion given by constant functions, we get a cochain complex $(\mathcal{B}^\infty(X^\bullet,\R),\delta_\bullet)$. We denote by $\mathcal{B}^\infty_\textup{alt}(X^{n+1},\R)$ the Banach $G$-submodule of alternating cochains, that is  the set of elements satisfying

\[
f(x_{\sigma(0)},\ldots,x_{\sigma(n)})=\textup{sgn}(\sigma)f(x_0,\ldots,x_n),
\]
for every permutation $\sigma \in S_{n+1}$.

\begin{deft}
Let $E$ be a Banach $G$-module. The \textit{continuous submodule} of $E$ is defined by

\[
\mathcal{C}E:=\{ v \in E| \lim_{g \to e} ||g.v -v||=0\}.
\]

A \textit{resolution} of $E$ is an exact complex $(E^\bullet, \partial_\bullet)$ of Banach $G$-modules such that $E^0=E$ and $E^n=0$ for every $n \leq -1$. 

\[
\xymatrix{
0 \ar[r] & E \ar[r]^{\partial_0} & E^1 \ar[r]^{\partial_1} & E^2 \ar[r]^{\partial_2} & \ldots \\
}
\]

We say that $(E^\bullet,\partial_\bullet)$ is a \textit{strong resolution} if the continuous subcomplex $(\mathcal{C}E^\bullet,\partial_\bullet)$ admits a contracting homotopy, that is a sequence of maps $h_n:\mathcal{C}E^{n+1} \rightarrow \mathcal{C}E^n$ such that $||h_n|| \leq 1 $ and $h_{n+1} \circ \partial_n + \partial_n \circ h_{n-1}=\textup{id}_{E^n}$ for all $n \in \N$.
\end{deft}

\indent In~\cite[Proposition 2.1]{burger:articolo} the authors prove that the complex of bounded measurable functions $(\mathcal{B}^\infty(X^\bullet,\R),\delta_\bullet)$ is a strong resolution of $\R$. Since the homology of any strong resolution of the trivial Banach $G$-module $\R$ maps in a natural way to the continuous bounded cohomology of $G$ by~\cite[Proposition 1.5.2.]{burger2:articolo}, there exists a canonical map 

\[
\mathfrak{c}^\bullet:H^\bullet(B^\infty(X^{\bullet+1},\R)^G) \rightarrow H^\bullet_{cb}(G).
\]

More precisely, every bounded measurable $G$-invariant cocycle $f:X^{n+1} \rightarrow \R$ determines canonically a class $\mathfrak{c}^n[f] \in H^n_{cb}(G)$. The same result holds for the subcomplex $(\mathcal{B}^\infty_\textup{alt}(X^\bullet,\R),\delta_\bullet)$ of alternating cochains. 


\section{The $\omega$-Borel cocycle}

\subsection{The cocycle $\text{Vol}^\omega$}

From now until the end of the paper we will consider the spaces $\mathbb{P}^1(\C)_\omega$ and $\mathbb{P}^1(\C_\omega)$ identified, hence we will refer to any of these two as they were the same space. The same will be done also for the groups $SL(n,\C)_\omega$ and $SL(n,\C_\omega)$. Moreover, to avoid a heavy notation we are going to refer to any sequence $(x_l)_{l \in \N}$ by dropping the parenthesis every time that we are considering the sequence itself instead of any of its single term.\\
\indent In this section we are going to construct a generalization of the hyperbolic volume function which will live on $\mathbb{P}^1(\C_\omega)^4$. This generalization will reveal the fundamental tool to define the $\omega$-Borel cocycle.\\ 
\indent  Before starting, we want to underline a delicate point. Since we want to exploit the properties of the standard Borel cocycle, one could try to define the new function $\textup{Vol}^\omega$ simply by taking the $\omega$-limit of the volumes, that is $\textup{Vol}^\omega(x^0_\omega,\ldots,x^3_\omega)=\text{$\omega$-$\lim_{l \to \infty}$}\textup{Vol}(x^0_l,\ldots,x^3_l)$, where $x^i_l$ is any representative of $x^i_\omega$. Unfortunately this definition is not correct. Indeed, if we suppose to have $3$ points that coincide, say $x^0_\omega=x^1_\omega=x^2_\omega$, different choices of representatives lead to different values of the $\omega$-limit of their volumes. Hence, we need to be careful. \\
\indent Let $\mathbb{P}^1(\C_\omega)^{(4)}$ be the space of $4$-tuples of distinct points on $\mathbb{P}^1(\C_\omega)$. As in the standard case, there is a natural cross ratio function 

\[
cr_\omega: \mathbb{P}^1(\C_\omega)^{(4)} \rightarrow \C_\omega \setminus \{0,1\}, \hspace{10pt} cr_\omega(x^0_\omega,x^1_\omega,x^2_\omega,x^3_\omega)=\frac{(x^0_\omega-x^2_\omega)(x^1_\omega-x^3_\omega)}{(x^0_\omega-x^3_\omega)(x^1_\omega-x^2_\omega)},
\]
which is well defined by its purely algebraic nature. Every $x^i_\omega$ may be considered in $\C_\omega$ or equal to $\infty$.  If we define the Bloch--Wigner function by

\[
D_2:\C \rightarrow \R, \hspace{10pt} D_2(z):=\Im(\textup{Li}_2(z))+\textup{arg}(1-z)\log |z|,
\]
where $\textup{Li}_2(z)$ is the dilogarithm function, by still denoting $D_2$ its continuous extension on $\mathbb{P}^1(\C)$, we can formulate the following  

\begin{deft}
The $\omega$-\textit{Bloch--Wigner function} is given by

\[
D_2^\omega:\C_\omega \cup \{ \infty \} \rightarrow \R, \hspace{10pt} D_2^\omega(x_\omega):=\text{$\omega$-$\lim_{l \to \infty}$} D_2(x_l) \hspace{5pt} \text{for $x_\omega \in \C_\omega$ and} \hspace{5pt} D^\omega_2(\infty):=0.
\]
where $x_l$ is any representative of the equivalence class $x_\omega$. 
\end{deft}

\begin{lem}
If $x_l$ and $y_l$ are two sequences representing the same element in $\C_\omega$, then 

\[
\text{$\omega$-$\lim_{l \to \infty}$} D_2(x_l)=\text{$\omega$-$\lim_{l \to \infty}$} D_2(y_l).
\]
\end{lem}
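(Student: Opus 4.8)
The plan is to reduce the statement to the continuity of $D_2$ on the compact space $\mathbb{P}^1(\C)$ together with the uniqueness of $\omega$-limits. Regarding the representatives $x_l$ and $y_l$ as sequences of points in $\mathbb{P}^1(\C)\cong S^2$, which is compact and Hausdorff, Proposition~\ref{existence} guarantees that they $\omega$-converge to unique points $\xi,\eta\in\mathbb{P}^1(\C)$. Since $D_2$ is continuous on the compact space $\mathbb{P}^1(\C)$ it is bounded, so its range lies in a compact interval and Proposition~\ref{continuous} applies, giving $\omega$-$\lim_{l\to\infty}D_2(x_l)=D_2(\xi)$ and $\omega$-$\lim_{l\to\infty}D_2(y_l)=D_2(\eta)$. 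Thus the whole statement reduces to proving that $\xi=\eta$, i.e.\ that two representatives of the same class in $\C_\omega$ share the same $\omega$-limit in $\mathbb{P}^1(\C)$.

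To establish $\xi=\eta$ I would first rewrite the equivalence $x_l\sim_\omega y_l$, which reads $\omega$-$\lim_{l\to\infty}|x_l-y_l|^{1/\lambda_l}=0$, as ordinary $\omega$-smallness of the differences. Fix $\delta>0$ and set $\epsilon:=\min\{\delta,1/2\}<1$. By hypothesis the set $\{l:|x_l-y_l|^{1/\lambda_l}<\epsilon\}$ lies in $\omega$, and on this set $|x_l-y_l|<\epsilon^{\lambda_l}\le\epsilon\le\delta$, where the middle inequality uses $\epsilon<1$ and $\lambda_l\ge 1$. Since $\delta$ was arbitrary, this shows $\omega$-$\lim_{l\to\infty}|x_l-y_l|=0$.

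Next I would transfer this Euclidean estimate to $\mathbb{P}^1(\C)$ via the chordal metric $\rho(z,w)=2|z-w|/\sqrt{(1+|z|^2)(1+|w|^2)}$, which induces the topology of $\mathbb{P}^1(\C)$ and satisfies the crude bound $\rho(x_l,y_l)\le 2|x_l-y_l|$; hence $\omega$-$\lim_{l\to\infty}\rho(x_l,y_l)=0$. Applying Proposition~\ref{continuous} to the continuous map $z\mapsto\rho(z,\xi)$ (with values in the compact interval $[0,2]$) gives $\omega$-$\lim_{l\to\infty}\rho(x_l,\xi)=\rho(\xi,\xi)=0$, and the pointwise triangle inequality $0\le\rho(y_l,\xi)\le\rho(y_l,x_l)+\rho(x_l,\xi)$ together with the elementary additivity and squeezing properties of the $\omega$-limit forces $\omega$-$\lim_{l\to\infty}\rho(y_l,\xi)=0$, i.e.\ $y_l$ $\omega$-converges to $\xi$ as well. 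Uniqueness of the $\omega$-limit (Proposition~\ref{existence}) then gives $\eta=\xi$, completing the argument.

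The only genuinely delicate step is the passage handled in the second paragraph: representatives of a class in $\C_\omega$ may well escape to infinity in $\C$, since they are only controlled in the $\omega$-norm $|\cdot|^\omega$, so one cannot argue naively with the Euclidean distance. It is exactly the hypothesis $\lambda_l\ge 1$ that makes the exponentiated condition $|x_l-y_l|^{1/\lambda_l}\to_\omega 0$ still control the ordinary difference $|x_l-y_l|$. Once this is in place the chordal metric absorbs the points at infinity harmlessly, and the remainder is the formal machinery of $\omega$-limits on compact Hausdorff spaces.
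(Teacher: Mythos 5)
Your proof is correct and takes essentially the same approach as the paper: both arguments reduce the statement to the fact that equivalent representatives $\omega$-converge to the same point of the compact space $\mathbb{P}^1(\C)$, and then conclude by continuity of $D_2$ together with Proposition~\ref{continuous}. The only difference is one of detail: the paper simply asserts that $\omega$-$\lim_{l\to\infty}|x_l-y_l|^{1/\lambda_l}=0$ forces a common limit in $\mathbb{P}^1(\C)$, whereas you justify this carefully (the bound $|x_l-y_l|<\epsilon^{\lambda_l}\le\epsilon$ exploiting $\lambda_l\ge 1$, then the chordal metric and the triangle inequality), thereby filling in precisely the step the paper glosses over.
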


\begin{proof}
Since $\mathbb{P}^1(\C)$ is compact and $\omega$-$\lim_{l \to \infty} |x_l - y_l|^\frac{1}{\lambda_l}=0$, both sequences $x_l$ and $y_l$ will converge to the same limit in $\C \cup \{\infty \}$. Denote by $\xi$ this point. As a consequence of Proposition~\ref{continuous} and by the continuity of $D_2$ we have

\[
\text{$\omega$-$\lim_{l \to \infty}$} D_2(x_l)= D_2(\text{$\omega$-$\lim_{l \to \infty}$} x_l)=D_2(\xi)=D_2(\text{$\omega$-$\lim_{l \to \infty}$} y_l)=\text{$\omega$-$\lim_{l \to \infty}$} D_2(y_l), 
\]
as claimed.
\end{proof} 

The previous lemma guarantees that the definition of the $\omega$-Bloch--Wigner function is correct since it does not depend on the choice of the representative of the class $x_\omega$.

\begin{deft}
The $\omega$-\textit{volume function} for a $4$-tuple of points $(x^0_\omega,x^1_\omega,x^2_\omega,x^3_\omega) \in \mathbb{P}^1(\C_\omega)^4$ is defined as 

\[
 \textup{Vol}^\omega(x^0_\omega,x^1_\omega,x^2_\omega,x^3_\omega)=
\begin{cases}
&D^\omega_2(cr_\omega(x^0_\omega,x^1_\omega,x^2_\omega,x^3_\omega)) \hspace{5pt} \textup{if $(x^0_\omega,x^1_\omega,x^2_\omega,x^3_\omega) \in \mathbb{P}^1(\C_\omega)^{(4)}$},\\
&0 \hspace{5pt} \textup{otherwise}.
\end{cases}
\] 
\end{deft}

\begin{oss}\label{equality}
We are going to denote by $\textup{Vol}$ the composition $D_2 \circ cr$, where $D_2$ is the standard Bloch--Wigner function and $cr$ is the cross ratio on $\mathbb{P}^1(\C)$. 
Fix a $4$-tuple $(x^0_\omega,\ldots,x^3_\omega) \in \mathbb{P}^1(\C_\omega)^{4}$ of distinct points. Thanks to the natural identification between $\mathbb{P}^1(\C_\omega)$ and $\mathbb{P}^1(\C)_\omega$, we can think of each $x^i_\omega$ as the class of a sequence $x^i_l$ of points in $\mathbb{P}^1(\C)$. Now, it easy to see that

\[
cr_\omega(x^0_\omega,\ldots,x^3_\omega)=[cr(x^0_l,\ldots,x^3_l)]
\]
in $\C_\omega$ (if the $x^i_\omega$ are all distinct, also the terms of the sequences $x^i_l$ are distinct $\omega$-almost every $l \in \N$). By exploiting the previous identity, we can rewrite the definition of $\textup{Vol}^\omega$ as follows

\begin{align*}
\textup{Vol}^\omega(x^0_\omega,\ldots,x^3_\omega) &=D^\omega_2(cr_\omega(x^0_\omega,\ldots,x^3_\omega))=\text{$\omega$-$\lim_{l \to \infty}$} D_2(cr(x^0_l,\ldots,x^3_l))\\
&=\text{$\omega$-$\lim_{l \to \infty}$} \textup{Vol}(x^0_l,\ldots,x^3_l),
\end{align*}
and this is completely independent of the choice of representatives $x^0_l,\ldots,x^3_l$. Hence $\textup{Vol}^\omega$ coincides with the $\omega$-limit of the standard volumes $\textup{Vol}(x^0_l,\ldots,x^3_l)$ on a $4$-tuple $(x^0_\omega,\ldots,x^3_\omega) \in \mathbb{P}^1(\C_\omega)^{(4)}$, where $x^i_l$ is any representative for $x^i_\omega$. Even though we have already underlined that this is not true on the whole space $\mathbb{P}^1(\C_\omega)^4$, we can always choose suitable representatives for $x^i_\omega$ such that

\[
\textup{Vol}^\omega(x^0_\omega,\ldots,x^3_\omega)=\text{$\omega$-$\lim_{l \to \infty}$} \textup{Vol}(x^0_l,\ldots,x^3_l).
\] 
\end{oss}

\begin{prop}
The function $\textup{Vol}^\omega$ is a bounded, alternating, $GL(2,\C_\omega)$-invariant cocycle. 
\end{prop}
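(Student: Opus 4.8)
The plan is to derive each of the four properties of $\textup{Vol}^\omega$ from the corresponding property of the ordinary volume cocycle $\textup{Vol}=D_2\circ cr$ on $\mathbb{P}^1(\C)$, which is the classical bounded, alternating, $GL(2,\C)$-invariant (Borel) cocycle. The bridge in each case is the identity recorded in Remark~\ref{equality}: on a tuple of pairwise distinct points, $\textup{Vol}^\omega$ is computed as the $\omega$-limit of the ordinary volumes of any chosen representatives. Throughout I would use that $\text{$\omega$-}\lim$ exists for every bounded real sequence (Proposition~\ref{existence} applied to a compact interval) and is linear, so that it commutes with finite signed sums and with multiplication by $\textup{sgn}(\sigma)=\pm 1$.

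Boundedness is immediate: the Bloch--Wigner function is bounded, say $|D_2|\le v_3$ where $v_3$ is the maximal volume of an ideal tetrahedron, so $|\textup{Vol}|\le v_3$ on all of $\mathbb{P}^1(\C)^4$; since $\textup{Vol}^\omega$ is either $0$ or an $\omega$-limit of values of $\textup{Vol}$, and an $\omega$-limit of a sequence bounded by $v_3$ is again bounded by $v_3$, we obtain $\|\textup{Vol}^\omega\|_\infty\le v_3$.

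For the alternating property and the $GL(2,\C_\omega)$-invariance I would split into the distinct and the degenerate case. On $\mathbb{P}^1(\C_\omega)^{(4)}$ I invoke Remark~\ref{equality} to rewrite $\textup{Vol}^\omega$ as an $\omega$-limit of ordinary volumes and then use that $\textup{Vol}$ is itself alternating together with the linearity of $\text{$\omega$-}\lim$, giving $\textup{Vol}^\omega(x^{\sigma(0)}_\omega,\ldots,x^{\sigma(3)}_\omega)=\textup{sgn}(\sigma)\,\textup{Vol}^\omega(x^0_\omega,\ldots,x^3_\omega)$ for every $\sigma\in S_4$; likewise, invariance follows from the purely algebraic M\"obius invariance of the cross ratio, which yields $cr_\omega(g.x^0_\omega,\ldots,g.x^3_\omega)=cr_\omega(x^0_\omega,\ldots,x^3_\omega)$ for every $g\in GL(2,\C_\omega)$, hence the same value after applying $D^\omega_2$. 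On the complement both sides vanish: a permutation sends a tuple with a repeated entry to another tuple with a repeated entry, and $g$ acts as a bijection of $\mathbb{P}^1(\C_\omega)$, so it preserves the locus of degenerate tuples; in both situations the identity reads $0=\textup{sgn}(\sigma)\cdot 0$ or $0=0$.

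The cocycle relation $\delta_4\textup{Vol}^\omega=0$ is the step I expect to be the real obstacle, and it is precisely the delicate point anticipated in the introduction: naively lifting a $5$-tuple to sequences and taking $\omega$-limits of the ordinary five-term relation breaks down on those $4$-subtuples that are degenerate in $\mathbb{P}^1(\C_\omega)$, since there $\textup{Vol}^\omega=0$ while the $\omega$-limit of $\textup{Vol}$ need not vanish for arbitrary representatives. My plan to circumvent this is to choose representatives coherently. Given $(x^0_\omega,\ldots,x^4_\omega)$, I consider the equivalence relation on $\{0,\ldots,4\}$ identifying $a$ and $b$ when $x^a_\omega=x^b_\omega$, and I pick representative sequences that are literally constant on each class, using the \emph{same} sequence for all indices in one class (a legitimate choice, since a common representative still represents the repeated point by definition of $\sim_\omega$ on $\mathbb{P}^1(\C)_\omega$). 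For every $l$ the classical five-term relation
\[
\sum_{i=0}^{4}(-1)^i\,\textup{Vol}(x^0_l,\ldots,\widehat{x^i_l},\ldots,x^4_l)=0
\]
holds; taking $\omega$-limits and using linearity gives $\sum_{i=0}^{4}(-1)^i\,\text{$\omega$-}\lim_l \textup{Vol}(x^0_l,\ldots,\widehat{x^i_l},\ldots,x^4_l)=0$. Each term then matches $\textup{Vol}^\omega$ of the corresponding subtuple: if $(\ldots,\widehat{x^i_\omega},\ldots)$ is distinct, Remark~\ref{equality} identifies the $\omega$-limit with $\textup{Vol}^\omega$; if it is degenerate, it contains two indices from a common class, whence $x^a_l=x^b_l$ for all $l$ by our choice, so $\textup{Vol}(\ldots)=0$ for all $l$ and the $\omega$-limit is $0=\textup{Vol}^\omega$. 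Consequently the displayed identity is exactly $\delta_4\textup{Vol}^\omega(x^0_\omega,\ldots,x^4_\omega)=0$, and the coherent choice of representatives is the only genuinely nontrivial ingredient in the whole argument.
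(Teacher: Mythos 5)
Your proposal is correct and follows essentially the same route as the paper: reduce every property of $\textup{Vol}^\omega$ to the corresponding property of the classical cocycle $\textup{Vol}$ by choosing representatives for which $\textup{Vol}^\omega$ equals the $\omega$-limit of classical volumes (Remark~\ref{equality}) and using linearity of $\omega$-limits. The only difference is one of detail: where the paper merely asserts that ``suitable representatives'' exist and omits the cocycle and alternating cases, you make the choice explicit (identical sequences for coinciding points), which is precisely the realization of the paper's implicit claim and correctly handles the degenerate subtuples in the five-term relation.
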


\begin{proof}
Most of the properties we stated follow directly from the properties of the standard volume function $\textup{Vol}$. We are going to show $GL(2,\C_\omega)$-invariance, for instance. From now until the end of the proof we are going to pick suitable representative sequences for points in $\mathbb{P}^1(\C_\omega)$ such that

\[
\textup{Vol}^\omega(x^0_\omega,\ldots,x^3_\omega)=\text{$\omega$-$\lim_{l \to \infty}$} \textup{Vol}(x^0_l,\ldots,x^3_l).
\] 

Let $g_\omega \in GL(2,\C_\omega)$. We want to show that $g_\omega . \textup{Vol}^\omega = \textup{Vol}^\omega$. 

\begin{align*}
g_\omega.\textup{Vol}^\omega(x^0_\omega,x^1_\omega,x^2_\omega,x^3_\omega)&=\textup{Vol}^\omega(g_\omega^{-1}.x^0_\omega,\ldots,g_\omega^{-1}.x^3_\omega)\\
&=\text{$\omega$-$\lim_{l \to \infty}$} \textup{Vol}(g_l^{-1}.x^0_l,\ldots,g_l^{-1}.x^3_l)
\end{align*}
and thanks to the equivariance of the classic volume function we get 

\[
\text{$\omega$-$\lim_{l \to \infty}$} \textup{Vol}(g_l^{-1}.x^0_l,\ldots,g_l^{-1}.x^3_l)=\text{$\omega$-$\lim_{l \to \infty}$} \textup{Vol}(x^0_l,\ldots,x^3_l)=\textup{Vol}^\omega(x^0_\omega,\ldots,x^3_\omega),
\]
as required. The strategy to prove the alternating property and the cocycle property of $\textup{Vol}^\omega$ is the same as above and we omit it.\\
\indent Finally, the boundedness is obvious since the $\omega$-Bloch--Wigner is nothing more than the $\omega$-limit of a sequence of real values all bounded by $\nu_3$ on $\mathbb{P}^1(\C_\omega)^{(4)}$ and it coincides with $0$ on the complementary. Here $\nu_3$ is the volume of a regular ideal hyperbolic tetrahedron in $\mathbb{H}^3$.

\end{proof}

\subsection{The cocycle $B^\omega_n$}

In order to define the $\omega$-Borel invariant for a representation $\rho_\omega:\Gamma \rightarrow SL(n,\C_\omega)$, we first need to define the $\omega$-Borel cocycle. We are going to follow the same construction exposed in~\cite[Section 3]{iozzi:articolo}. Let $\mathfrak{S}^\omega_k(m)$ be the following space

\[
\mathfrak{S}^\omega_k(m):=\{ (x^0_\omega,\ldots,x_\omega^k) \in (\C_\omega^m)^{k+1} | \langle x^0_\omega,\ldots x_\omega^k \rangle = \C_\omega^m \} / GL(m,\C_\omega)
\]
where $GL(m,\C_\omega)$ acts on $(k+1)$-tuples of vectors by the diagonal action and $\langle x^0_\omega,\ldots x^k_\omega \rangle$ is the $\C_\omega$-linear space generated by $x^0_\omega,\ldots,x^k_\omega$. It obvious that if $k < m-1$ the space defined above is empty. For every $m$-dimensional vector space $V$ over $\C_\omega$ and any $(k+1)$-tuple of spanning vectors $(x^0_\omega,\ldots,x^k_\omega) \in V^{k+1}$, we choose an isomorphism $V \rightarrow \C^m_\omega$. Since any two different choices of isomorphisms are related by an element $g_\omega \in GL(m,\C_\omega)$, we get a well defined element of $\mathfrak{S}^\omega_k(m)$ which will be denoted by $[V;(x^0_\omega,\ldots,x^k_\omega)]$. For 

\[
\mathfrak{S}^\omega_k:= \bigsqcup_{m \geq 0} \mathfrak{S}^\omega_k(m)= \mathfrak{S}^\omega_k(0) \sqcup \ldots \sqcup \mathfrak{S}^\omega_k(k+1)
\]
we have two different face maps $\varepsilon^{(k)}_i, \eta^{(k)}_i: \mathfrak{S}^\omega_k \rightarrow \mathfrak{S}^\omega_{k-1}$ given by

\begin{align*}
\varepsilon^{(k)}_i[\C^m_\omega;(x^0_\omega,\ldots,x^k_\omega)]&:=[\langle x^0_\omega, \ldots, \hat x^i_\omega, \ldots, x^k_\omega \rangle;(x^0_\omega,\ldots, \hat x^i_\omega, \ldots, x^k_\omega)],\\
\eta^{(k)}_i[\C^m_\omega;(x^0_\omega,\ldots,x^k_\omega)]&:=[\C^m_\omega/\langle x^i_\omega \rangle;(x^0_\omega,\ldots, \hat x^i_\omega, \ldots, x^k_\omega)].
\end{align*}

Since these maps satisfy the same relations as in~\cite{iozzi:articolo}, that is for all $0 \leq i< j \leq k$ 

\begin{align*}
\varepsilon^{(k-1)}_j \varepsilon^{(k)}_i &= \varepsilon^{(k-1)}_i \varepsilon^{(k)}_{j+1},\\
\eta^{(k-1)}_j \eta^{(k)}_i &= \eta^{(k-1)}_i \eta^{(k)}_{j+1},\\
\eta^{(k-1)}_j \varepsilon^{(k)}_i &= \varepsilon^{(k-1)}_i \eta^{(k)}_{j+1},
\end{align*}
we can define a boundary operator 

\[
D_k: \Z[\mathfrak{S}^\omega_k] \rightarrow \Z[\mathfrak{S}^\omega_{k-1}], \hspace{10pt} D_k(\sigma):=\sum_{i=0}^k (-1)^i (\varepsilon^{(k)}_i(\sigma) - \eta^{(k)}_i(\sigma)),
\]
where $\Z[\mathfrak{S}^\omega_k]$ is the free abelian group generated by $\mathfrak{S}^\omega_k$ and it is equal to $0$ for $k \leq -1$. We still denote by $\varepsilon^{(k)}_i$ and $\eta^{(k)}_i$ the linear extensions of face maps to $\Z[\mathfrak{S}^\omega_k]$. In this way we have constructed a chain complex $(\Z[\mathfrak{S}^\omega_\bullet],D_\bullet)$. With the purpose of dualizing this complex, we recall that we have a natural action of the symmetric group $S_{k+1}$ on $\mathfrak{S}^\omega_k$, hence we can define  

\[
\R_\text{alt}(\mathfrak{S}^\omega_k):= \{ f: \mathfrak{S}^\omega_k \rightarrow \R | f \hspace{5pt} \text{is alternating with respect to the $S_{k+1}$-action} \} 
\]
and we can define $D^*_k$ as the dual of $D_k \otimes id_\R$. The construction above produces a cochain complex $(\R_\text{alt}(\mathfrak{S}^\omega_\bullet),D^*_\bullet)$.\\
\indent We are going now to define a cocycle living in $\R_\text{alt}(\mathfrak{S}^\omega_3)$ which will be used to construct the $\omega$-Borel cocycle.  Since the $\omega$-volume function $\textup{Vol}^\omega$ introduced in the previous section can be thought of as defined on $(\C_\omega^2 \setminus \{0\})^4$, it is extendable to 

\[
\text{Vol}^\omega: \mathfrak{S}^\omega_3 \rightarrow \R
\]
where we set $\text{Vol}^\omega|\mathfrak{S}^\omega_3(m)$ to be identically zero if $m \neq 2$ and 

\[
\text{Vol}^\omega[\C^2_\omega;(v^0_\omega,\ldots,v^3_\omega)]:=
	\begin{cases}
	&\text{Vol}^\omega(v^0_\omega,\ldots,v^3_\omega) \hspace{10pt} \text{if each $v^i_\omega \neq 0$,}\\
	&0 \hspace{10pt} \text{otherwise}.
	\end{cases}
\]

By the compatibilty of the $\omega$-limit with respect to finite sums, it should be clear that 

\begin{prop}
The function $\textup{Vol}^\omega \in \R_\textup{alt}(\mathfrak{S}^\omega_3)$ is a cocycle, that is $D^*_4(\textup{Vol}^\omega)=0$.
\end{prop}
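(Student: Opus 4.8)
The plan is to deduce the identity $D^*_4(\textup{Vol}^\omega)=0$ from the corresponding statement for the ordinary volume cocycle over $\C$, namely $D^*_4(\textup{Vol})=0$ on $\mathfrak{S}_\bullet$, which is established by the parallel computation in~\cite{iozzi:articolo}, by passing to the $\omega$-limit. Concretely, I must show that $\textup{Vol}^\omega(D_4\sigma)=0$ for every $\sigma=[\C^m_\omega;(x^0_\omega,\ldots,x^4_\omega)] \in \mathfrak{S}^\omega_4$, where $D_4\sigma=\sum_{i=0}^4(-1)^i(\varepsilon^{(4)}_i\sigma-\eta^{(4)}_i\sigma)$.

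First I would fix representative sequences $x^i_l \in \C^m$ for the $x^i_\omega$, chosen adapted to the configuration so that the $\C$-dimension of the span of any sub-tuple of $\{x^0_l,\ldots,x^4_l\}$ equals, $\omega$-almost everywhere, the $\C_\omega$-dimension of the span of the corresponding sub-tuple of $\{x^0_\omega,\ldots,x^4_\omega\}$. The linear-algebraic input is that $\C_\omega$-independence is detected $\omega$-almost everywhere by $\C$-independence of representatives: if an $\omega$-large set of indices admitted a nontrivial relation $\sum_j c^j_l x^j_l=0$ over $\C$, normalised by $\max_j|c^j_l|=1$, then the classes $c^j_\omega:=[c^j_l]$ would satisfy $\max_j|c^j_\omega|^\omega=1$ and $\sum_j c^j_\omega x^j_\omega=0$, a nontrivial $\C_\omega$-relation. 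Conversely one realises the existing $\C_\omega$-relations by choosing the dependent representatives to be the corresponding $\C$-combinations of a fixed independent sub-family.

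With such representatives, $\sigma_l:=[\langle x^0_l,\ldots,x^4_l\rangle;(x^0_l,\ldots,x^4_l)]$ lies in $\mathfrak{S}_4(m)$ for $\omega$-almost every $l$, and the face maps are compatible with the $\omega$-limit: for each $i$ the standard faces $\varepsilon^{(4)}_i\sigma_l$ and $\eta^{(4)}_i\sigma_l$ have, $\omega$-almost everywhere, the same dimension as $\varepsilon^{(4)}_i\sigma$ and $\eta^{(4)}_i\sigma$. Since $\textup{Vol}^\omega$ is supported on $\mathfrak{S}^\omega_3(2)$ (and $\textup{Vol}$ on $\mathfrak{S}_3(2)$), a given face contributes on both sides precisely when it lands in dimension $2$, and there, by Remark~\ref{equality}, its $\textup{Vol}^\omega$-value is the $\omega$-limit of the $\textup{Vol}$-value of the corresponding face of $\sigma_l$. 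Tracking dimensions under $\varepsilon^{(4)}_i$ (span of the remaining vectors, dropping dimension by at most one) and $\eta^{(4)}_i$ (quotient by a line, dropping dimension by exactly one) shows that such contributions occur only for $m\in\{2,3\}$: for $m=2$ they reduce to the five-term cocycle relation for $\textup{Vol}^\omega$ on $4$-tuples of $\mathbb{P}^1(\C_\omega)$ already established in the previous proposition, while for $m=3$ the surviving quotient faces $\eta^{(4)}_i$ and the degenerate subspace faces $\varepsilon^{(4)}_i$ must cancel — this is the genuinely nontrivial part.

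Finally, using the compatibility of the $\omega$-limit with finite sums (Proposition~\ref{continuous}), I conclude
\[
\textup{Vol}^\omega(D_4\sigma)=\textup{$\omega$-}\lim_{l\to\infty}\textup{Vol}(D_4\sigma_l)=\textup{$\omega$-}\lim_{l\to\infty}(D^*_4\textup{Vol})(\sigma_l)=0,
\]
the last equality being the cocycle property of the ordinary volume function proved in~\cite{iozzi:articolo}. The main obstacle is the middle, bookkeeping step: one must guarantee that the adapted representatives realise the $\C_\omega$-dimension of every relevant sub-span and quotient simultaneously $\omega$-almost everywhere, since only then does the term-by-term $\omega$-limit of the standard identity reproduce $D^*_4\textup{Vol}^\omega(\sigma)$ exactly. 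The $m=3$ case, where subspace and quotient faces interact, is where this care is essential.
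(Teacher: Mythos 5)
Your strategy---lift $\sigma\in\mathfrak{S}^\omega_4$ to configurations $\sigma_l$ over $\C$ and take the $\omega$-limit of the identity $\textup{Vol}(D_4\sigma_l)=0$ of~\cite{iozzi:articolo}---hinges entirely on the lemma you flag at the end and never prove: the existence of representatives realising \emph{every} sub-span dimension of $(x^0_\omega,\ldots,x^4_\omega)$ exactly, $\omega$-almost everywhere. This is a genuine gap, and the construction you sketch does not close it. Writing each dependent vector as a $\C$-combination of a lifted basis, with \emph{arbitrary} representatives of the coefficients, preserves the dependencies on that basis but not the dependencies among other sub-families, and it is exactly the degeneracies that matter which can be destroyed. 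Concretely, take $m=3$ and the configuration $e_1,e_2,e_3$, $p_\omega=e_1+e_3$, $q_\omega=e_1+2e_3$. In the quotient face $\eta^{(4)}$ by $\langle e_3\rangle$ the three points $\bar e_1,\bar p_\omega,\bar q_\omega$ coincide, so $\textup{Vol}^\omega$ assigns this face the value $0$. Your recipe permits the representatives $p_l=e_1+c\,\delta_l e_2+e_3$ and $q_l=e_1+\delta_l e_2+2e_3$ with $\delta_l\neq 0$ and $|\delta_l|^{1/\lambda_l}\to 0$: these are legitimate lifts in your sense, yet the corresponding face of $\sigma_l$ consists of four \emph{distinct} points of $\mathbb{P}^1(\C)$ (namely $0,\infty,c\delta_l,\delta_l$ in an affine chart) with constant cross-ratio $c$, so its volumes $\omega$-converge to $D_2(c)\neq 0$ for non-real $c$. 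The term-by-term matching on which your final display rests therefore fails for such lifts. This is precisely the phenomenon the paper warns about at the start of Section 3.1: coincidences of \emph{two} points are harmless (there the cross-ratio of any representatives is a representative of $0$, $1$ or $\infty$, so $D_2\to 0$ by continuity), but coincidences of three or more points have representative-dependent limit volumes; your faces over $\C_\omega$ can have exactly this kind of degeneracy.

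The gap is fillable, but it requires an argument you do not give: since nonzero terms occur only for $m\in\{2,3\}$, one can use the $GL(m,\C_\omega)$-action to normalise a basis among the five vectors to constant sequences and then check, case by case on the rank, that the remaining (at most two) coordinate columns admit lifts preserving zero entries, proportionalities and vanishing $2\times 2$ minors \emph{exactly}, not just asymptotically. Be aware also that this is not the paper's route: the paper repeats the case analysis of~\cite[Lemma 8, Lemma 9]{iozzi:articolo} directly over the field $\C_\omega$, where degenerate tuples contribute $0$ by definition of $\textup{Vol}^\omega$ and cancel combinatorially, and $\omega$-limits are invoked only on tuples of distinct points, where Remark~\ref{equality} makes the limit independent of the representatives; no matroid-preserving lifting lemma is ever needed there.
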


Since the proof of this proposition is the same as~\cite[Lemma 8, Lemma 9]{iozzi:articolo} we omit it. In order to define the $\omega$-Borel cocyle we are going to introduce the spaces of affine flags in $\C^n_\omega$. A complete flag $F_\omega$ in $\C^n_\omega$ is a sequence of linear subspaces

\[
F^0_\omega \subset F^1_\omega \subset \ldots \subset F^n_\omega
\]
such that every $F^i_\omega$ has dimension $i$ as $\C_\omega$-vector space. An affine flag $(F_\omega,v_\omega)$ is a complete flag $F_\omega$ together with an $n$-tuple of vectors $v_\omega=(v^1_\omega,\ldots,v^n_\omega) \in (\C_\omega^n)^n$ such that 

\[
F^i_\omega=\C_\omega v_\omega^i + F_\omega^{i-1}, \hspace{10pt} i\geq1.
\]

It is clear that the group $GL(n,\C_\omega)$ acts naturally on the space of flags $\mathscr{F}(n,\C_\omega)$ and on the space of affine flags $\mathscr{F}_\text{aff}(n,\C_\omega)$ of $\C_\omega^n$. Let $\Z[\mathscr{F}_\text{aff}(n,\C_\omega)^{k+1}]$ be the abelian group generated by $\mathscr{F}_\text{aff}(n,\C_\omega)^{k+1}$ and let $\partial_k$ be the standard boundary map induced by the face maps $\varepsilon^{(k)}_i:\mathscr{F}_\text{aff}(n,\C_\omega)^{k+1} \rightarrow \mathscr{F}_\text{aff}(n,\C_\omega)^k$ consisting in dropping the $i^{th}$-component for $1 \leq k \leq n-1$. Moreover set $\partial_0:\Z[\mathscr{F}_\text{aff}(n,\C_\omega)] \rightarrow 0$. We are ready now to define

\[
T_k:(\Z[\mathscr{F}_\text{aff}(n,\C_\omega)^k],\partial_k) \rightarrow (\Z[\mathfrak{S}^\omega_k],D_k)
\]
which will enable us to construct a morphism between the dual of the complexes above (more precisely on their alternating versions). Given a multi-index $\mathbf{J} \in \{0,1,\ldots,n-1\}^{k+1}$, we start by defining 
\[
\tau_\mathbf{J}: \mathscr{F}_\text{aff}(n,\C_\omega)^{k+1} \rightarrow \mathfrak{S}^\omega_k
\]
as the function
\[
\tau_\mathbf{J}((F_{0,\omega},v_{0,\omega}),\ldots,(F_{k,\omega},v_{k,\omega})):=
\Bigg[
\frac{\langle F^{j_0+1}_{0,\omega},\ldots,F^{j_k+1}_{k,\omega} \rangle}{\langle F^{j_0}_{0,\omega}, \ldots, F^{j_k}_{k,\omega} \rangle}
; (v^{j_0+1}_{0,\omega},\ldots,v^{j_k+1}_{k,\omega})
\Bigg]
\]
and finally
\[
T_k((F_{0,\omega},v_{0,\omega}),\ldots,(F_{k,\omega},v_{k,\omega})):=\sum_{\mathbf{J} \in \{0, \ldots, n-1 \}^{k+1}} \tau_\mathbf{J}((F_{0,\omega},v_{0,\omega}),\ldots,(F_{k,\omega},v_{k,\omega})).
\]

If we now recall that there exists a natural action of $S_{k+1}$ on $\mathscr{F}_\text{aff}(n,\C_\omega)^{k+1}$ and dualize the complex considered so far, we get the cocomplex of alternating cochains $(\R_\text{alt}(\mathscr{F}_\text{aff}(n,\C_\omega)^{k+1}), \partial_k^*)$ (here $\partial_k^*$ is the dual of $\partial_k \otimes id_\R$). By denoting $T^*_k$ the dual map of $T_k \otimes id_\R$, the same proof of~\cite[Lemma 11]{iozzi:articolo} guarantees that $T^*_k$ is a morphism a complexes taking values in $(\R_\text{alt}(\mathscr{F}_\text{aff}(n,\C_\omega)^{k+1}))^{GL(n,\C_\omega)}$. 

\begin{deft}
We define the $\omega$-\textit{Borel function} of degree $n$ as 

\begin{align*}
& B_n^\omega ((F_{0,\omega},v_{0,\omega}),\ldots,(F_{3,\omega},v_{3,\omega})):=T^*_3(\textup{Vol}^\omega)=\\
&=\sum_{\mathbf{J} \in \{0, \ldots ,n-1\}^4} 
\textup{Vol}^\omega
\Bigg[
\frac{\langle F^{j_0+1}_{0,\omega},\ldots,F^{j_3+1}_{3,\omega} \rangle}{\langle F^{j_0}_{0,\omega}, \ldots, F^{j_3}_{3,\omega} \rangle}
; (v^{j_0+1}_{0,\omega},\ldots,v^{j_3+1}_{3,\omega})
\Bigg].
\end{align*} 
\end{deft}

Using the same approach of~\cite{iozzi:articolo} it is straghtfoward to prove that
\begin{prop}
The function $B^\omega_n$ is a bounded, alternating, strict $GL(n,\C_\omega)$-invariant cocycle on the space $\mathscr{F}_{\textup{aff}}(n,\C_\omega)^4$ of $4$-tuples of affine flags which naturally descends to the space $\mathscr{F}(n,\C_\omega)^4$ of $4$-tuples of flags. Moreover, for every $4$-tuple of flags $(F_{0,\omega},\ldots,F_{3,\omega}) \in \mathscr{F}(n,\C_\omega)^4$ we have the following bound

\[
|B^\omega_n(F_{0,\omega},\ldots,F_{3,\omega})|\leq \frac{n(n^2-1)}{6}\nu_3.
\]
\end{prop}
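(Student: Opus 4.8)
The plan is to reduce every assertion about $B^\omega_n$ to the corresponding—already established—property of the cocycle $\textup{Vol}^\omega \in \R_\textup{alt}(\mathfrak{S}^\omega_3)$, transported through the cochain map $T^*_3$. First I would record that $B^\omega_n = T^*_3(\textup{Vol}^\omega)$ by definition, and that $T^*_3$ is a morphism of complexes landing in the $GL(n,\C_\omega)$-invariants, exactly as in~\cite[Lemma 11]{iozzi:articolo}. The cocycle property is then immediate: since $\textup{Vol}^\omega$ is a cocycle (i.e. $D^*_4(\textup{Vol}^\omega)=0$) and $T^*_\bullet$ is a chain map, one has $\partial_4^* B^\omega_n = \partial_4^* T^*_3(\textup{Vol}^\omega) = T^*_4(D^*_4 \textup{Vol}^\omega)=0$. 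The alternating property and the $GL(n,\C_\omega)$-invariance follow because $T^*_3$ takes values in $(\R_\textup{alt}(\mathscr{F}_\textup{aff}(n,\C_\omega)^4))^{GL(n,\C_\omega)}$ by construction, so that these two properties are built into the target of the map.

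Next I would address the descent to $\mathscr{F}(n,\C_\omega)^4$ and the strictness. The point is that each summand $\tau_\mathbf{J}$ feeds $\textup{Vol}^\omega$ only the quotient line together with the images of the vectors $v^{j_i+1}_{i,\omega}$, and $\textup{Vol}^\omega$ vanishes unless the ambient quotient is $2$-dimensional; moreover $\textup{Vol}^\omega$ is $GL(2,\C_\omega)$-invariant and hence insensitive to the choice of affine lifts of the flags. This is precisely the mechanism that lets $B^\omega_n$ factor through the space of plain flags, and strict invariance follows because the value depends only on the flag data. I would note that this is formally identical to the classical computation in~\cite{iozzi:articolo}, the only substantive change being that $\C$ has been replaced by $\C_\omega$ throughout, and all the algebraic identities used (for face maps, for the cross ratio $cr_\omega$, for the quotient constructions) are purely formal and therefore survive the base change verbatim.

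The quantitative bound is the part that genuinely uses the value $\nu_3$. For a fixed $4$-tuple of flags the sum defining $B^\omega_n$ runs over the index set $\{0,\ldots,n-1\}^4$, but only those multi-indices $\mathbf{J}$ for which the quotient
\[
\frac{\langle F^{j_0+1}_{0,\omega},\ldots,F^{j_3+1}_{3,\omega}\rangle}{\langle F^{j_0}_{0,\omega},\ldots,F^{j_3}_{3,\omega}\rangle}
\]
is two-dimensional contribute a nonzero term, and each such term is bounded in absolute value by $\nu_3$ since $\textup{Vol}^\omega$ is the $\omega$-limit of ordinary hyperbolic volumes, all dominated by $\nu_3$. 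The combinatorial content is therefore to count the admissible multi-indices, and this count is exactly the same as in the classical Borel cocycle, yielding the factor $n(n^2-1)/6$. I expect this counting step to be the main obstacle—not because it is deep, but because one must verify carefully that the dimension-drop pattern forcing a two-dimensional quotient is unchanged over $\C_\omega$. Since the field $\C_\omega$ has no bearing on the dimensions of the subspaces involved (these are governed purely by the flag combinatorics), the count transfers directly from~\cite{iozzi:articolo}, and I would simply invoke that computation to obtain the stated estimate $|B^\omega_n(F_{0,\omega},\ldots,F_{3,\omega})|\leq \frac{n(n^2-1)}{6}\nu_3$.
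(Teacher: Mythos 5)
Your proposal is correct and takes essentially the same route as the paper, which in fact gives no written proof at all: it simply asserts that the statement follows ``using the same approach of''~\cite{iozzi:articolo}, i.e.\ by transporting the already-established properties of $\textup{Vol}^\omega$ through the chain map $T^*_3$ (cocycle, alternating, invariance, descent to flags) and by reusing the classical multi-index count, which is purely linear-algebraic and hence field-independent, to get the bound $\frac{n(n^2-1)}{6}\nu_3$. Your write-up merely makes explicit the transfer argument that the paper leaves implicit.
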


We want now to use~\cite[Proposition 2.1]{burger:articolo} in order to obtain the desired cohomology class. Before doing this we need to underline a delicate point in the discussion. By Proposition~\ref{locally} the field $\C_\omega$ is not locally compact with respect to the topology induced by the ultrametric absolute value. In particular the group $SL(n,\C_\omega)$ cannot be locally compact with respect to the topology inherited by $M(n,\C_\omega)$ seen as $\C_\omega^{n^2}$. Hence it is meaningless to refer to the Haar measure or to the Haar $\sigma$-algebra for $SL(n,\C_\omega)$. In order to overcome these difficulties, we are going to consider $SL^\delta(n,\C_\omega)$, that is the group $SL(n,\C_\omega)$ endowed with the discrete topology. The same for $GL^\delta(n,\C_\omega)$. Moreover, in order to apply correctly~\cite[Proposition 2.1]{burger:articolo}, we are going to consider the discrete $\sigma$-algebra on both $\mathfrak{S}^\omega_k$ and $\mathscr{F}(n,\C_\omega)$.\\
\indent Recall that $\mathfrak{S}^\omega_k(n)$ is a space on which the symmetric group $S_{k+1}$ acts naturally. Let $\mathcal{B}_\textup{alt}^\infty(\mathfrak{S}^\omega_k)$ be the Banach space of bounded alternating Borel functions on $\mathfrak{S}^\omega_k$. The restriction of $D^*_k$ gives us back a complex of Banach spaces $(\mathcal{B}^\infty_\textup{alt}(\mathfrak{S}^\omega_\bullet),D^*_\bullet)$.\\
By restricting the map $T^*_k$ to the subcomplexes of bounded Borel functions and by applying~\cite[Proposition 2.1]{burger:articolo} to $(\mathcal{B}^\infty_\textup{alt}(\mathscr{F}(n,\C_\omega)^{\bullet+1}),\partial_\bullet)$, we get a map

\[
S^k_\omega(n):H^k(\mathcal{B}^\infty_\textup{alt}(\mathfrak{S}^\omega_\bullet)) \rightarrow H^k_b(GL^\delta(n,\C_\omega)).
\] 

\begin{deft}
With the notation above, we define the $\omega$-\textit{Borel cohomology class of degree} $n$ as

\[
\beta ^\omega(n):=S^3_\omega(n)(\textup{Vol}^\omega)=\mathfrak{c}^3[B^\omega_n],
\]
where $\mathfrak{c^3}:H^3(\mathcal{B}^\infty_\textup{alt}(\mathscr{F}(n,\C_\omega)^{\bullet+1})^{GL(n,\C_\omega)}) \rightarrow H^3_b(GL^\delta(n,\C_\omega))$ is the canonical map of~\cite[Proposition 2.1]{burger:articolo}. 
\end{deft}

\begin{oss}

We have the following commutative diagram 

\[
\xymatrix{
1 \ar[r] & \C^\times_\omega \ar[r] & GL(n,\C_\omega) \ar[r] & PGL(n,\C_\omega) \ar[r] \ar[d]^{\cong} & 1\\
1 \ar[r] & \mu_n \ar[u] \ar[r] & SL(n,\C_\omega) \ar[u] \ar[r] & PSL(n,\C_\omega) \ar[r] & 1\\
}
\]
where $\C_\omega^\times$ is the group of invertible elements of $\C_\omega$ and $\mu_n$ is the group of the $n$-th roots of unity. Since these groups are both amenable, by functoriality of bounded cohomology it is possible to conclude that $H_b^3(GL^\delta(n,\C_\omega)) \cong H^3_b(SL^\delta(n,\C_\omega))$. In particular, we are going to think of the class $\beta^\omega(n)$ as an element of both $H^3_b(GL^\delta(n,\C_\omega))$ and $H^3_b(SL^\delta(n,\C_\omega))$. 
\end{oss}


\section{The $\omega$-Borel invariant for a representation $\rho_\omega$}

Let $\Gamma$ be the fundamental group of a complete hyperbolic 3-manifold $M$ with toric cusps. This means that we can decompose the manifold $M$ as  $M = N \cup \bigcup_{i=1}^h C_i$, where $N$ is any compact core of $M$ and for every $i=1,\ldots,h$ the component $C_i$ is a cuspidal neighborhood diffeomorphic to $T_i \times (0,\infty)$, where $T_i$ is a torus whose fundamental group corresponds to a suitable abelian parabolic subgroup of $PSL(2,\C)$. Our aim is to define a numerical invariant associated to any representation $\rho_\omega: \Gamma \rightarrow SL(n,\C_\omega)$. Let $i: (M, \varnothing) \rightarrow (M,M \setminus N)$ be the natural inclusion map. Since the fundamental group of the boundary $\partial N$ is abelian, hence amenable, it can be proved that the maps $i^*_b:H^k_b(M,M \setminus N) \rightarrow H^k_b(M)$ induced at the level of bounded cohomology groups are isometric isomorphisms for $k \geq 2$ (see~\cite{bucher:articolo}). Moreover, it holds $H^k_b(M,M \setminus N) \cong H^k_b(N, \partial N)$ by homotopy invariance of bounded cohomology. If we denote by $c$ the canonical comparison map $c:H^k_b(N,\partial N) \rightarrow H^k(N,\partial N)$, we can consider the composition

\[
\xymatrix{
H^3_{b}(SL^\delta(n,\C_\omega)) \ar[r]^{(\rho_\omega)^*_b} & H^3_b(\Gamma) \cong H^3_b(M) \ar[r]^{\hspace{15pt}(i^*_b)^{-1}} & H^3_b(N,\partial N) \ar[r]^c & H^3(N,\partial N),
}
\]
where the isomorphism that appears in this composition holds since $M$ is aspherical. By choosing a fundamental class $[N,\partial N]$ for $H_3(N,\partial N)$ we are ready to give the following 

\begin{deft}
The $\omega$-\textit{Borel invariant} associated to a representation \mbox{$\rho_\omega: \Gamma \rightarrow SL(n,\C_\omega)$} is given by 

\[
\beta^\omega_n(\rho_\omega):=\langle (c \circ (i^*_b)^{-1} \circ (\rho_\omega)^*_b)\beta^\omega(n),[N,\partial N] \rangle,
\]
where the brackets $\langle \cdot , \cdot \rangle$ indicate the Kronecker pairing. 
\end{deft}

\begin{oss}
The previous definition is indipendent of the choice of the compact core $N$. Moreover, it can be easily extended to any lattice of $PSL(2,\C)$. 
\end{oss}

We are going to generalize some of the classic results valid for the standard Borel invariant. The proofs are identical to the ones exposed in~\cite{iozzi:articolo}. Before starting, we recall the existence of natural transfer maps

\[
\xymatrix{
H^\bullet_b(\Gamma) \ar[r]^{\textup{trans}_\Gamma \hspace{25pt}} & H^\bullet_{cb}(PSL(2,\C)) \hspace{20pt} & H^\bullet(N,\partial N) \ar[r]^{\tau_{DR}\hspace{10pt}} & H^\bullet_c(PSL(2,\C)),
}
\]
where $H_c^\bullet(PSL(2,\C))$ denotes the continuous cohomology groups of $PSL(2,\C)$.  We remind the reader that the continuous cohomology groups of a locally compact group $G$ are constructed as the continuous bounded cohomology groups just by dropping the requirement of boundedness of cochains.\\
\indent The transfer maps are defined as it follows. Let $V_k$ be the set $C_b((\mathbb{H}^3)^{k+1},\R)$ of real bounded continuous functions on $(k+1)$-tuples of points of $\mathbb{H}^3$. With the standard homogeneous boundary operators and the structure of Banach $PSL(2,\C)$-module given by

\[
(g.f)(x^0,\ldots,x^n):=f(g^{-1}x^0,\ldots,g^{-1}x^n), \hspace{10pt} ||f||_{\infty}=\sup_{x^0,\dots,x^n \in \mathbb{H}^3} |f(x^0,\ldots,x^n)|
\]
for every $f \in C_b((\mathbb{H}^3)^{n+1},\R)$ and $g \in PSL(2,\C)$, we get a complex $V_\bullet=C_b((\mathbb{H}^3)^{\bullet+1},\R)$ of Banach $PSL(2,\C)$-modules that allows us to compute the continuous bounded cohomology of $PSL(2,\C)$. More precisely, it holds
\[
H^k(V_\bullet^{PSL(2,\C)}) \cong H^k_{cb}(PSL(2,\C))
\] 
for every $k \geq 0$. Moreover, by substituting $PSL(2,\C)$ with $\Gamma$, we have in an analogous way that

\[
H^k(V_\bullet^\Gamma) \cong H^k_{b}(\Gamma)
\] 
for every $k \geq 0$. The previous considerations allow us to define the map

\[
\textup{trans}_\Gamma:V_k^\Gamma \rightarrow V_k^{PSL(2,\C)},
\]
\[
\textup{trans}_\Gamma(c)(x_0,\ldots,x_n):=\int_{\Gamma \backslash PSL(2,\C)} c(\bar gx_0,\ldots,\bar gx_n)d\mu(\bar g),
\] 
where $c$ is any $\Gamma$-invariant element of $V_k$ and $\mu$ is any invariant probability measure on $\Gamma \backslash PSL(2,\C)$. Here $\bar g$ stands for the equivalence class of $g$ into $\Gamma \backslash PSL(2,\C)$.\\
\indent Since $\textup{trans}_\Gamma(c)$ is $PSL(2,\C)$-equivariant and $\textup{trans}_\Gamma$ commutes with the coboundary operator, we get a well-defined map

\[
\textup{trans}_\Gamma: H^\bullet_b(\Gamma) \rightarrow H^\bullet_{cb}(PSL(2,\C)).
\]

We now pass to the description of the map $\tau_{DR}$. If $\pi: \mathbb{H}^3 \rightarrow M =\Gamma \backslash \mathbb{H}^3$ is the natural covering projection, we set $U:=\pi^{-1}(M \setminus N)$. Recall that the relative cohomology group $H^k(N,\partial N)$ is isomorphic to the cohomology group $H^k(\Omega^\bullet(\mathbb{H}^3,U)^\Gamma)$ of the $\Gamma$-invariant differential forms on $\mathbb{H}^3$ which vanishes on $U$.  Since, by Van Est isomorphism we have that $H^k_c(PSL(2,\C),\R) \cong \Omega^k(\mathbb{H}^3)^{PSL(2,\C)}$, we define

\[
\tau_{DR}: \Omega^k(\mathbb{H}^3,U)^\Gamma \rightarrow \Omega^k(\mathbb{H}^3)^{PSL(2,\C)} , \hspace{10pt} \tau_{DR}(\alpha):=\int_{\Gamma \backslash PSL(2,\C)} \bar g ^*\alpha d\mu(\bar g),
\]
where $\mu$ and $\bar g$ are the same as before. The map $\tau_{DR}$ commutes with the coboundary operators inducing a map 

\[
\tau_{DR}:H^k(N,\partial N) \cong H^k(\Omega^\bullet(\mathbb{H}^3,U)^\Gamma) \rightarrow H^k(\Omega^\bullet(\mathbb{H}^3)^{PSL(2,\C)}) \cong H^k_c(PSL(2,\C)).
\]

For a more detailed description of the above maps we suggest to the reader to check~\cite[Section 3.2]{bucher2:articolo}.

\begin{prop}
For $k \geq 2$ the diagram

\[
\xymatrix{
H^k(\mathcal{B}^\infty_\textup{alt}(\mathfrak{S}^\omega_\bullet)) \ar[r]^{S^k_\omega(n+1) \hspace{15pt}} \ar[dr]_{S^k_\omega(n)} & H^k_b(GL^\delta(n+1,\C_\omega)) \ar[d]\\
&H^k_b(GL^\delta(n,\C_\omega))\\
}
\]
commutes. The vertical arrow is induced by the left corner injection $GL(n,\C_\omega) \rightarrow GL(n+1,\C_\omega)$. In particular we have that $\beta^\omega(n+1)$ restricts to $\beta^\omega(n)$. 
\end{prop}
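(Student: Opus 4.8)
The plan is to realize the restriction along the corner injection by an explicit equivariant map of flag spaces, and then to reduce the whole statement to a cochain-level identity whose extra terms are annihilated by the alternating property. Write $\iota:GL(n,\C_\omega)\hookrightarrow GL(n+1,\C_\omega)$ for the corner injection $g\mapsto\mathrm{diag}(g,1)$ and identify $\C^n_\omega$ with the span of the first $n$ standard basis vectors of $\C^{n+1}_\omega$. First I would introduce the flag-extension map
\[
j:\mathscr{F}_{\mathrm{aff}}(n,\C_\omega)\longrightarrow\mathscr{F}_{\mathrm{aff}}(n+1,\C_\omega),
\]
defined by $\hat F^i_\omega:=F^i_\omega$ for $i\le n$, $\hat F^{n+1}_\omega:=\C^{n+1}_\omega$, and by appending $e_{n+1}$ to the vector datum, i.e. $\hat v_\omega:=(v^1_\omega,\ldots,v^n_\omega,e_{n+1})$. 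Since $\iota(g)$ fixes $e_{n+1}$ and acts as $g$ on $\C^n_\omega$, the map $j$ is $\iota$-equivariant and descends to ordinary flags.

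The second step is homological. By \cite[Proposition 2.1]{burger:articolo} the complex $\mathcal{B}^\infty_{\mathrm{alt}}(\mathscr{F}(n+1,\C_\omega)^{\bullet+1})$ is a strong resolution of $\R$ over $GL^\delta(n+1,\C_\omega)$; restricting the action along $\iota$ it remains a strong resolution over $GL^\delta(n,\C_\omega)$, since the contracting homotopy is independent of the group and continuity is automatic in the discrete setting. The pullback $j^*$ is then a $GL^\delta(n,\C_\omega)$-equivariant morphism of strong resolutions extending $\mathrm{id}_\R$, so by the functoriality of the canonical map $\mathfrak{c}^\bullet$ with respect to the pair $(\iota,j)$ the restriction of $S^k_\omega(n+1)(\phi)=\mathfrak{c}^k[T^{*}_k\phi]$ to $GL^\delta(n,\C_\omega)$ is represented by the pulled-back cocycle $j^*T^{*}_k\phi=\phi\circ T^{(n+1)}_k\circ j_*$, where $T^{(n+1)}_k$ denotes the map $T_k$ for $GL(n+1,\C_\omega)$. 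It thus remains to compare this with $T^{(n)}_k{}^{*}\phi$, the corresponding cocycle for the $\C^n_\omega$-construction.

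The heart of the matter is the cochain identity $j^*T^{*}_k\phi=T^{(n)}_k{}^{*}\phi$, valid for every alternating $\phi$ as soon as $k\ge 2$. Evaluating $T^{(n+1)}_k$ on extended flags, I would split the defining sum over $\mathbf{J}\in\{0,\ldots,n\}^{k+1}$ according to $S:=\{i:j_i=n\}$. When $S=\varnothing$ every $j_i+1\le n$, so $\hat F^{j_i+1}_{i,\omega}=F^{j_i+1}_{i,\omega}$ and $\hat v^{j_i+1}_{i,\omega}=v^{j_i+1}_{i,\omega}$, and these terms reproduce exactly $T^{(n)}_k$. When $S\neq\varnothing$ the numerator of $\tau_{\mathbf{J}}$ is all of $\C^{n+1}_\omega$, while any index in $S$ already contributes $\hat F^n_{i,\omega}=\C^n_\omega$ to the denominator, forcing the denominator to equal $\C^n_\omega$; hence $\tau_{\mathbf{J}}$ lands in the one-dimensional stratum $\mathfrak{S}^\omega_k(1)$, and in $\C^{n+1}_\omega/\C^n_\omega$ the entry in position $i$ is the common nonzero generator $\bar e_{n+1}$ if $i\in S$ and $0$ if $i\notin S$. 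If $|S|\ge 2$ two entries coincide with the same nonzero generator, so $\phi$ vanishes by alternation; if $|S|=1$ then, because $k\ge 2$, at least two of the remaining $k$ entries are the zero vector, and $\phi$ again vanishes by alternation. Thus all extra terms are killed, the identity holds on the nose, and combining with the second step gives $\mathrm{res}(S^k_\omega(n+1)(\phi))=S^k_\omega(n)(\phi)$; specializing to $k=3$ and $\phi=\mathrm{Vol}^\omega$ yields that $\beta^\omega(n+1)$ restricts to $\beta^\omega(n)$.

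I expect the main obstacle to be the bookkeeping of the third step: checking that every term with $S\neq\varnothing$ genuinely lands in $\mathfrak{S}^\omega_k(1)$ with the stated generator/zero pattern, and pinning down the exact role of the hypothesis $k\ge 2$, which is precisely what guarantees that the $|S|=1$ terms carry a repeated zero entry and are therefore annihilated by the alternating condition. A secondary point requiring care is the naturality invoked in the second step, namely that restriction of a strong resolution stays strong over the subgroup and that $\mathfrak{c}^\bullet$ is functorial for the equivariant pair $(\iota,j)$.
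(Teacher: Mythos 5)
Your proof is correct and takes essentially the same route as the paper: the same flag-extension map (the paper's $i_n$, which embeds the flag and appends $e_{n+1}$), the same splitting of the sum defining $T_k$ according to the set of indices with $j_i=n$, and the same observation that the terms with nonempty index set land in $\mathfrak{S}^\omega_k(1)$ with one repeated pattern, so that $i_n^*$ implements the restriction on cohomology. Your write-up actually makes explicit two points the paper leaves implicit, namely the alternation argument killing the extra terms (where $k\geq 2$ enters for $|S|=1$) and the functoriality of the canonical map for the equivariant pair of resolutions.
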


\begin{proof}
Let $i_n: \C_\omega^n \rightarrow \C_\omega^{n+1}$ be the injection $i_n(x^1_\omega,\ldots,x^n_\omega):=(x^1_\omega,\ldots,x^n_\omega,0)$. By an abuse of notation we define 

\[
i_n:\mathscr{F}_\textup{aff}(n,\C_\omega) \rightarrow \mathscr{F}_\textup{aff}(n+1,\C_\omega)
\]
as $i_n((F_\omega,v_\omega))=(\tilde F_\omega,\tilde v_\omega)$ where for $0 \leq j \leq n$ we have $\tilde F_\omega^j=i_n(F^j_\omega),\tilde v_\omega^j=i_n(v^j_\omega)$ and $\tilde v^{n+1}_\omega=e_{n+1}$. 
If we set $\mathbf{J} \in \{0, \ldots n \}^{k+1}$ and $I=\{ i: 0 \leq i \leq k \hspace{5pt} \textup{such that} \hspace{5pt} j_i=n \}$, it is easy to verify that if $I=\varnothing$ this implies $\mathbf{J} \in \{0, \ldots ,n-1\}^{k+1}$ and 

\[
\tau_\mathbf{J}(i_n(F_{0,\omega},v_{0,\omega}),\ldots,i_n(F_{k,\omega},v_{k,\omega}))=\tau_\mathbf{J}((F_{0,\omega},v_{0,\omega}),\ldots,(F_{k,\omega},v_{k,\omega}))
\]
while if $I \neq \varnothing$, then

\[
\tau_\mathbf{J}(i_n(F_{0,\omega},v_{0,\omega}),\ldots,i_n(F_{k,\omega},v_{k,\omega}))=[\C_\omega;(\delta^I_0,\ldots,\delta^I_k)],
\]
where $\delta^I_i=[e_{n+1}]$ if $i \in I$ and $0$ otherwise. 
The previous considerations imply that $i_n$ induces a commutative diagram of complexes 

\[
\xymatrix{
\mathcal{B}^\infty_\textup{alt}(\mathfrak{S}^\omega_k) \ar[r]^{T^*_k \hspace{30pt}} \ar[dr]_{T^*_k} & \mathcal{B}^\infty_\textup{alt}(\mathscr{F}_\textup{aff}(n+1,\C_\omega)^{k+1}) \ar[d]^{i^*_n}\\
& \mathcal{B}^\infty_\textup{alt}(\mathscr{F}_\textup{aff}(n,\C_\omega)^{k+1})\\
}
\]
and since the map $i_n^*$ implements the restriction in bounded cohomology, the commutativity of the diagram which appears in the statement follows. In particular, by focusing our attention on the case of $k=3$ we get

\[
i^*_{n}(B^\omega_{n+1})=i^*_n \circ T^*_3 (\textup{Vol}^\omega)=T^*_3(\textup{Vol}^\omega)=B^\omega_{n}
\]
as claimed.
\end{proof}

\begin{prop}
For any representation $\rho_\omega:\Gamma \rightarrow SL(n,\C_\omega)$ the composition

\[
\xymatrix{
H^3_b(SL^\delta(n,\C_\omega)) \ar[r] & H^3_b(\Gamma) \ar[r]^{\textup{trans}_\Gamma \hspace{20pt}} & H^3_{cb}(PSL(2,\C))
}
\]
maps $\beta^\omega(n)$ to $\frac{\beta^\omega_n(\rho_\omega)}{\textup{Vol}(M)}\beta(2)$. In particular, it holds the following bound

\[
|\beta^\omega_n(\rho_\omega)| \leq \frac{n(n^2-1)}{6}\textup{Vol}(M),
\]
as in the classic case.
\end{prop}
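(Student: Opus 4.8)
The plan is to exploit the one-dimensionality of $H^3_{cb}(PSL(2,\C))$ together with the duality between the two transfer maps $\textup{trans}_\Gamma$ and $\tau_{DR}$. First I would recall the classical fact that $H^3_{cb}(PSL(2,\C)) \cong \R$ is generated by the volume (Borel) class $\beta(2)$. Consequently the image of $\beta^\omega(n)$ under the composition $\textup{trans}_\Gamma \circ (\rho_\omega)^*_b$ is forced to be a scalar multiple $\mu\,\beta(2)$ for a unique $\mu \in \R$, so the statement reduces to identifying $\mu = \beta^\omega_n(\rho_\omega)/\textup{Vol}(M)$ and then bounding $|\mu|$.

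The heart of the argument is the compatibility of the two transfer maps with the comparison maps, encoded in the commutative diagram
\[
\xymatrix{
H^3_b(\Gamma) \ar[d]_{\textup{trans}_\Gamma} \ar[r]^{(i^*_b)^{-1}} & H^3_b(N,\partial N) \ar[r]^{c} & H^3(N,\partial N) \ar[d]^{\tau_{DR}} \\
H^3_{cb}(PSL(2,\C)) \ar[rr]^{c} & & H^3_c(PSL(2,\C))
}
\]
Here both $\textup{trans}_\Gamma$ and $\tau_{DR}$ are defined by integration over $\Gamma \backslash PSL(2,\C)$ against the same invariant probability measure, and this averaging commutes with the comparison map from bounded to continuous cohomology; the detailed verification is the one carried out in~\cite{bucher2:articolo}. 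Pushing $(\rho_\omega)^*_b\beta^\omega(n)$ around this diagram, and using that under the Van Est isomorphism the class $c(\beta(2)) \in H^3_c(PSL(2,\C)) \cong \Omega^3(\mathbb{H}^3)^{PSL(2,\C)}$ is the hyperbolic volume form, I obtain that $\tau_{DR}$ applied to $c \circ (i^*_b)^{-1} \circ (\rho_\omega)^*_b\beta^\omega(n)$ equals $\mu\,\omega_{\mathbb{H}^3}$.

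Next I would evaluate against the fundamental class. By the proportionality principle for the cusped manifold, pairing a $\Gamma$-invariant relative $3$-form against $[N,\partial N]$ returns $\textup{Vol}(M)$ times the coefficient of its $\tau_{DR}$-image relative to the normalized volume form. Combined with the very definition of $\beta^\omega_n(\rho_\omega)$ as the pairing $\langle c \circ (i^*_b)^{-1} \circ (\rho_\omega)^*_b\beta^\omega(n), [N,\partial N]\rangle$, the previous step yields $\beta^\omega_n(\rho_\omega) = \mu\,\textup{Vol}(M)$, that is $\mu = \beta^\omega_n(\rho_\omega)/\textup{Vol}(M)$, which is the first assertion. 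This computation is word-for-word that of~\cite{iozzi:articolo}; the point I would stress is that the transfer and de Rham machinery live entirely on the classical $\Gamma$- and $PSL(2,\C)$-side, so the only $\omega$-dependent ingredient entering the diagram is the pulled-back class, and the failure of local compactness of $\C_\omega$ never interferes.

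Finally the bound follows from a norm estimate. The Gromov norm of $\beta(2)$ equals $\nu_3$, while the canonical map $\mathfrak{c}^3$ together with the cocycle bound of the previous proposition give $\|\beta^\omega(n)\| \leq \|B^\omega_n\|_\infty \leq \frac{n(n^2-1)}{6}\nu_3$. Since both $(\rho_\omega)^*_b$ and $\textup{trans}_\Gamma$ are norm non-increasing,
\[
|\mu|\,\nu_3 = \|\mu\,\beta(2)\| = \|\textup{trans}_\Gamma((\rho_\omega)^*_b\beta^\omega(n))\| \leq \|\beta^\omega(n)\| \leq \frac{n(n^2-1)}{6}\nu_3,
\]
so $|\mu| \leq \frac{n(n^2-1)}{6}$, and substituting $\mu = \beta^\omega_n(\rho_\omega)/\textup{Vol}(M)$ gives the claimed inequality. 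I expect the main obstacle to be the bookkeeping of normalizations in the third step, ensuring that the volume-form coefficient converts precisely into $\textup{Vol}(M)$ for the non-compact $M$; but this is identical to the classical situation, since once the representation has been used to pull the $\omega$-Borel class back to $\Gamma$ the remaining analysis takes place over $\C$.
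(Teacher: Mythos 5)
Your proposal is correct and follows essentially the same route as the paper: both exploit $H^3_{cb}(PSL(2,\C))\cong\R$ to write the image of $\beta^\omega(n)$ as a scalar multiple of $\beta(2)$, identify the scalar via the commutative square relating $\textup{trans}_\Gamma$, the comparison maps and $\tau_{DR}$ (your ``proportionality'' step is exactly the paper's use of $\tau_{DR}(\omega_{N,\partial N})=\beta(2)$ together with injectivity of $\tau_{DR}$ in top degree), and then obtain the bound from the norm $\|\beta(2)\|=\nu_3$, the cocycle bound $\|B^\omega_n\|_\infty\leq\frac{n(n^2-1)}{6}\nu_3$, and the fact that $(\rho_\omega)^*_b$ and $\textup{trans}_\Gamma$ are norm non-increasing.
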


\begin{proof}
Recall that we have the following commutative diagram

\[
\xymatrix{
H^3_b(SL^\delta(n,\C_\omega)) \ar[d]^{(\rho_\omega)^*_b}\\
H^3_b(\Gamma) \ar[d]^{\cong} \ar[dr]^{\text{trans}_\Gamma}\\
H^3_b(N,\partial N) \ar[d]^c & H^3_{cb}(PSL(2,\C)) \ar[d]^c\\
H^3(N,\partial N) \ar[r]^{\tau_{DR}} & H^3_{c}(PSL(2,\C)).\\
}
\]

Since $H^3_{cb}(PSL(2,\C)) \cong \R$, there exists a suitable $\lambda \in \R$ such that 
\[
\text{trans}_\Gamma \circ (\rho_\omega)^*_b (\beta^\omega(n))=\lambda \beta(2).
\]

Hence by composing both sides with the comparison map $c$, we obtain

\[
c \circ \text{trans}_\Gamma \circ (\rho_\omega)^*_b(\beta^\omega(n))=c(\lambda \beta(2))=\lambda(c\beta(2))=\lambda \beta(2).
\]

If we pick up $\omega_{N,\partial N} \in H^3(N,\partial N)$ in such a way that its evaluation on the fundamental class $[N,\partial N]$ gives us back $\textup{Vol}(M)$, we have that $\tau_{DR}(\omega_{N,\partial N})=\beta(2)$. In particular 

\[
\tau_{DR}(c \circ (i^*_b)^{-1} \circ (\rho_\omega)^*_b(\beta^\omega(n)))= \lambda \tau_{DR}(\omega_{N,\partial N})
\]
and by injectivity of the map $\tau_{DR}$ in top degree we get

\[
(c \circ (i^*_b)^{-1} \circ (\rho_\omega)^*_b)(\beta^\omega(n))=\lambda \omega_{N,\partial N}.
\]

If we evaluate both sides on the fundamental class, we obtain

\[
\beta^\omega_n(\rho_\omega)=\langle (c \circ (i^*_b)^{-1} \circ (\rho_\omega)^*_b)(\beta^\omega(n)),[N,\partial N] \rangle=\langle  \lambda \omega_{N,\partial N}, [N,\partial N] \rangle= \lambda \textup{Vol}(M). 
\]

At the same time it holds

\[
|\lambda|=\frac{|| \text{trans}_\Gamma \circ (\rho_\omega)^*_b \beta^\omega(n)||}{||\beta(2)||} \leq \frac{n(n^2-1)}{6},
\]
from which it follows

\[
|\beta^\omega_n(\rho_\omega)| \leq \frac{n(n^2-1)}{6}\textup{Vol}(M),
\]
as claimed. 
\end{proof}

Recall that there is a natural inclusion of fields of $\C$ into $\C_\omega$ given by constant sequences. In particular we have natural embeddings of $\C^m$ into $\C_\omega^m$ and of $SL(n,\C)$ into $SL(n,\C_\omega)$. Since every representation $\rho:\Gamma \rightarrow SL(n,\C)$ determines a representation $\hat \rho$ into $SL(n,\C_\omega)$ by composing it with the previous embedding, it is quite natural to ask which is the relation between $\beta_n^\omega(\hat \rho)$ and $\beta_n(\rho)$. We have the following 

\begin{prop}
Let $\rho:\Gamma \rightarrow SL(n,\C)$ be a representation. If we denote by $\hat \rho:\Gamma \rightarrow SL(n,\C_\omega)$ the representation obtained by composing $\rho$ with the natural embedding of $SL(n,\C)$ into $SL(n,\C_\omega)$, we have

\[
\beta^\omega_n(\hat \rho)=\beta_n(\rho).
\]
\end{prop}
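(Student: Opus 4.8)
The plan is to use the fact that the field embedding $\iota\colon SL(n,\C) \hookrightarrow SL(n,\C_\omega)$, given by constant sequences, factors the representation as $\hat\rho = \iota \circ \rho$, and to show that $\iota$ pulls the $\omega$-Borel class back to the ordinary Borel class. By functoriality of bounded cohomology this factorization gives
\[
(\hat\rho)^*_b = \rho^*_b \circ \iota^*_b, \qquad \iota^*_b\colon H^3_b(SL^\delta(n,\C_\omega)) \to H^3_b(SL^\delta(n,\C)),
\]
so it suffices to identify $\iota^*_b \beta^\omega(n)$ with the image of $\beta(n)$ in $H^3_b(SL^\delta(n,\C))$ and then observe that every subsequent operation in the two definitions is the same.

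First I would establish the cocycle-level identity $\iota^* B^\omega_n = B_n$. The embedding $\iota$ sends a flag in $\C^n$ to a flag in $\C^n_\omega$ by applying $\C \hookrightarrow \C_\omega$ componentwise, and likewise an affine flag to an affine flag; being $\C$-linear and compatible with spans and quotients, it intertwines the combinatorial maps $\tau_\mathbf{J}$, so $\iota^* B^\omega_n$ is the sum over the same multi-indices $\mathbf{J}$ of $\textup{Vol}^\omega$ evaluated on $4$-tuples of points of $\mathbb{P}^1(\C_\omega)$ coming from constant sequences. On such constant sequences Remark~\ref{equality} gives $\textup{Vol}^\omega = \textup{Vol}$, because the $\omega$-limit of a constant sequence returns its value. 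Hence each summand of $\iota^* B^\omega_n$ equals the corresponding summand of $B_n = T^*_3(\textup{Vol})$, and $\iota^* B^\omega_n = B_n$ as bounded alternating $GL(n,\C)$-invariant cocycles on flags.

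Next I would promote this cochain identity to cohomology. By naturality of the canonical map $\mathfrak{c}$ of~\cite[Proposition 2.1]{burger:articolo} with respect to the $\iota$-induced morphism of coefficient resolutions, the equality $\iota^* B^\omega_n = B_n$ yields $\iota^*_b \mathfrak{c}^3[B^\omega_n] = \mathfrak{c}^3[B_n]$, that is $\iota^*_b \beta^\omega(n) = \beta(n)$ in $H^3_b(SL^\delta(n,\C))$ (after forgetting continuity and restricting $\beta(n)$ from $PSL(n,\C)$). Combined with the factorization above this gives $(\hat\rho)^*_b \beta^\omega(n) = \rho^*_b \beta(n)$ in $H^3_b(\Gamma)$.

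Finally, the remaining steps defining the two invariants coincide: one transports the resulting class through the isometric isomorphism $(i^*_b)^{-1}$ to $H^3_b(N,\partial N)$, applies the comparison map $c$, and pairs with the fixed fundamental class $[N,\partial N]$; these depend only on $\Gamma$, $M$ and $[N,\partial N]$, not on the target of the representation. Applying them to the equal classes $(\hat\rho)^*_b \beta^\omega(n)$ and $\rho^*_b \beta(n)$ therefore gives $\beta^\omega_n(\hat\rho) = \beta_n(\rho)$. The step I expect to require the most care is the naturality of $\mathfrak{c}$ in the third paragraph: because the bounded cohomology is taken with the discrete topology and the discrete $\sigma$-algebra on the flag spaces, one must check that the map of resolutions induced by $\iota$ is genuinely a morphism of strong resolutions of $\R$, so that $\mathfrak{c}$ intertwines the two comparison maps as claimed.
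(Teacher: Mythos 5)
Your proposal is correct and is essentially the paper's own argument: factor $\hat\rho = j\circ\rho$, prove the restriction identity $j^*_b\,\beta^\omega(n)=\beta(n)$ by evaluating on constant sequences, and finish by functoriality of the pullback, pairing with $[N,\partial N]$; the only (immaterial) difference is that the paper verifies the cocycle identity one step earlier in the chain, on the configuration spaces $\mathfrak{S}_3$ (showing $\textup{Vol}=\textup{Vol}^\omega\circ\hat j_3$ and invoking a commutative diagram relating $S^3(n)$ and $S^3_\omega(n)$), whereas you verify $\iota^*B^\omega_n=B_n$ directly on flags, which is equivalent since $B^\omega_n=T^*_3(\textup{Vol}^\omega)$. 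One small point of care: Remark~\ref{equality} only covers $4$-tuples of \emph{distinct} points, so, as in the paper's case analysis, you should also observe that the constant-sequence embedding preserves both general position and degeneracy (for distinct $x,y\in\C$ one has $|x-y|^\omega=1\neq 0$, while coincident points stay coincident), so that on degenerate tuples both $\iota^*B^\omega_n$ and $B_n$ vanish by definition and the summand-by-summand equality holds in every case.
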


\begin{proof}
We are going to prove that the cohomology class $\beta^\omega(n)$ restricts naturally to the class $\beta(n)$. Let $j:SL(n,\C) \rightarrow SL(n,\C_\omega)$ be the natural embedding. By endowing both spaces with the discrete topology, we have a continuous morphism of groups that induces a map 

\[
j^*_b:H^3_b(SL^\delta(n,\C_\omega)) \rightarrow H^3_b(SL^\delta(n,\C)).
\]
We want to prove that $j^*_b(\beta^\omega(n))=\beta(n)$. From this it will follow

\begin{align*}
\beta^\omega_n(\hat \rho)&=\langle (c \circ (i^*_b)^{-1} \circ \hat \rho^*_b)\beta^\omega(n),[N,\partial N] \rangle=\langle (c \circ (i^*_b)^{-1} \circ (j \circ \rho)^*_b)\beta^\omega(n),[N,\partial N] \rangle\\
&=\langle (c \circ (i^*_b)^{-1} \circ \rho^*_b  \circ j^*_b)\beta^\omega(n),[N,\partial N] \rangle= \langle (c \circ (i^*_b)^{-1} \circ \rho^*_b)\beta(n),[N,\partial N] \rangle=\beta_n(\rho).
\end{align*}

Similarly to what we have done for the field $\C_\omega$, we define the configuration space

\[
\mathfrak{S}_k(m):=\{ (x^0,\ldots,x^k) \in (\C^m)^{k+1}|\langle x^0,\ldots,x^k\rangle = \C^m\}/GL(m,\C).
\]
 for every $k \geq m-1$. This family of spaces is exactly the family introduced by~\cite{iozzi:articolo}. There exists a natural family of maps given by

\[
\hat j_k(m): \mathfrak{S}_k(m) \rightarrow \mathfrak{S}^\omega_k(m), \hspace{10pt} \hat j_k(m)[\C^m;(v^0,\ldots,v^k)]:=[\C^m_\omega;(v^0,\ldots,v^k)],
\]
where each vector $v^i$ which appears on the right-hand side of the equation is thought of as an element of $\C^m_\omega$. This function is well-defined because $v^0,\ldots,v^k$ are generators also for $\C^m_\omega$ as a $\C_\omega$-vector space and the identifications induced via conjugation by $GL(m,\C)$ are respected.  By denoting

\[
\hat j_k:=\hat j_k(0) \sqcup \hat j_k(1) \sqcup \ldots \sqcup \hat j_{k}(k+1),
\]
we get the following commutative diagram

\[
\xymatrix{
H^3(\mathcal{B}_\textup{alt}^\infty(\mathfrak{S}^\omega_\bullet)) \ar[r]^{S^3_\omega(n)} \ar[d]_{H^3(\hat j_\bullet^*)} & H^3_b(SL^\delta(n,\C_\omega)) \ar[d]^{j^*_b}\\
H^3(\mathcal{B}_\textup{alt}^\infty(\mathfrak{S}_\bullet)) \ar[r]^{S^3(n)} & H^3_b(SL^\delta(n,\C)),\\
}
\]
where $\hat j^*_\bullet$ are the maps induced by $\hat j_\bullet$ on the Borel cochains. We will prove that $\textup{Vol}=\textup{Vol}^\omega \circ \hat j_3$, that is $H^3(\hat j^*_\bullet)[\textup{Vol}^\omega]=[\textup{Vol}]$. Let $m \in \{0,\ldots, 4\}$. It is clear that $\textup{Vol}=\textup{Vol}^\omega \circ \hat j_3(m)$ for $m \neq 2$ because both sides are equal to zero. Let now consider $[\C^2;(v^0,\ldots,v^3)] \in \mathfrak{S}_3(2)$. If any of these vectors is $0$ both functions evaluated on the $4$-tuple give us back $0$. Hence, we can suppose that each $v^i$ is different from $0$. If the vectors $v^0,\ldots,v^3$ are in general position into $\C^2$, they still remain in general position into $\C^2_\omega$. Thus

\begin{align*}
& \textup{Vol}^\omega \circ \hat j_3(2)[\C^2;(v^0,\ldots,v^3)]=\textup{Vol}^\omega[\C^2_\omega;(v^0,\ldots,v^3)]=\text{$\omega$-$\lim_{l \to \infty}$} \textup{Vol}(v^0,\ldots,v^3)\\
&=\textup{Vol}(v^0,\ldots,v^3)=\textup{Vol}[\C^2;(v^0,\ldots,v^3)].
\end{align*}

In the same way if $(v^0,\ldots,v^3)$ are not in general position into $\C^2$, they will not be in general position into $\C^2_\omega$ either, so both $\textup{Vol}^\omega \circ \hat j_3(2)$ and $\textup{Vol}$ will evaluate to be zero, as desired. 
\end{proof}

We want now to express $\beta^\omega_n(\rho_\omega)$ in terms of boundary maps. Recall that the complement of $N$ is $M$ is given by a finite union $\bigcup_{i=1}^h C_i$ of cuspidal neighborhoods. For every $i=1,\ldots,h$ the fundamental group $\pi_1(C_i)=H_i$ is an abelian parabolic subgroup of $PSL(2,\C)$, hence it has a unique fixed point $\xi_i$ in $\mathbb{P}^1(\C)$. We define the set

\[
\mathscr{C}(\Gamma):=\bigcup_{i=1}^h \Gamma.\xi_i.
\]

\begin{deft}
If $\Gamma = \pi_1(M)$ as above, given a representation $\rho_\omega: \Gamma \rightarrow SL(n,\C_\omega)$, a \textit{decoration} for $\rho_\omega$ is a map

\[
\varphi_\omega: \mathscr{C}(\Gamma) \rightarrow \mathscr{F}(n,\C_\omega)
\]
that is equivariant with respect to $\rho_\omega$.
\end{deft}

Recall now that the cocycle $B^\omega_n$ is a strict cocycle, as in the standard case. Hence the class $(c \circ (i^*_b)^{-1} \circ (\rho_\omega)^*_b)\beta^\omega(n)$ can be represented in $H^3_b(\Gamma)$ by $\varphi_\omega^*(B^\omega_n)$, where $\varphi_\omega$ is a decoration for $\rho_\omega$ (we refer to~\cite[Corollary 2.7]{burger:articolo} for this result about the pullback of strict cocycles along boundary maps). 
In order to realize the corresponding cocycle in $H^3_b(N,\partial N)$, we identify the universal cover $\tilde N$ of $N$ with $\mathbb{H}^3$ minus a set of $\Gamma$-equivariant horoballs, each one centered at an element $\xi \in \mathscr{C}(\Gamma)$. We define a map $p:\tilde N \rightarrow \mathscr{C}(\Gamma)$ in two steps. We first send each horospherical section to the corresponding element. Then, for the interior of $\tilde N$, we map a fundamental domain to a choosen $\xi_0 \in \mathscr{C}(\Gamma)$ and we extend equivariantly. In this way, any bounded $\Gamma$-invariant cocycle $c:\mathscr{C}(\Gamma) \rightarrow \R$ determines a relative cocycle on $(N, \partial N)$ as it follows

\[
\{ \sigma: \Delta^3 \rightarrow \tilde N \} \mapsto c(p(\sigma(e_0)),\ldots,p(\sigma(e_3))).
\]

If $\tau$ is a relative triangulation of $(N, \partial N)$ and $\tilde \tau$ is the lifted triangulation of a fundamental domain in $(\tilde N,\partial \tilde N)$,  the $\omega$-Borel invariant $\beta^\omega_n(\rho_\omega)$ can be computed by the following formula

\[
\beta^\omega_n(\rho_\omega)=\sum_{\tilde \sigma \in \tilde \tau} B^\omega_n(\varphi_\omega(p(\tilde \sigma(e_0))),\varphi_\omega(p(\tilde \sigma(e_1))),\varphi_\omega(p(\tilde \sigma(e_2))),\varphi_\omega(p(\tilde \sigma(e_3))))
\]
where $\tilde \sigma$ is a lifted copy of the simplex $\sigma \in \tau$.

\section{The case $n=2$ and properties of the invariant $\beta^\omega_2(\rho_\omega)$}

In this section we are going to focus our attention on the case of representations into $SL(2,\C_\omega)$. Suppose to have a sequence of representations $\rho_l: \Gamma \rightarrow SL(2,\C)$ that determines a representation $\rho_\omega:\Gamma \rightarrow SL(2,\C_\omega)$. A sequence of decorations $\varphi_l$ for $\rho_l$ produces in a natural way a decoration $\varphi_\omega$.  Indeed it suffices to compose the standard projection $\pi:\mathbb{P}^1(\C)^\N \rightarrow \mathbb{P}^1(\C)_\omega \cong \mathbb{P}^1(\C_\omega)$ with the product map $\prod \varphi_l:\mathbb{P}^1(\C) \rightarrow \mathbb{P}^1(\C)^\N$. We say that a decoration is \textit{non-degenerate} if for every $\xi_0,\ldots,\xi_3 \in \mathscr{C}(\Gamma)$ we have that the 4-tuple $(\varphi_\omega(\xi_0),\ldots,\varphi_\omega(\xi_3))$ contains at least 3 distinct points. If the decoration $\varphi_\omega$ is non-degenerate we have

\begin{align*}
\beta^\omega_2(\rho_\omega)&=\sum_{\tilde \sigma \in \tilde \tau} B^\omega_2(\varphi_\omega(p(\tilde \sigma(e_0))),\varphi_\omega(p(\tilde \sigma(e_1))),\varphi_\omega(p(\tilde \sigma(e_2))),\varphi_\omega(p(\tilde \sigma(e_3))))\\
&=\text{$\omega$-$\lim_{l \to \infty}$} \sum_{\tilde \sigma \in \tilde \tau}  B_2(\varphi_l(p(\tilde \sigma(e_0))),\varphi_l(p(\tilde \sigma(e_1))),\varphi_l(p(\tilde \sigma(e_2))),\varphi_l(p(\tilde \sigma(e_3))))\\
&=\text{$\omega$-$\lim_{l \to \infty}$} \beta_2(\rho_l),
\end{align*}
where the last equality is obtained by applying Corollary 2.7 of~\cite{burger:articolo}. The third equality exploits the non-degenerancy of the decoration $\varphi_\omega$. Hence we get 

\begin{prop}\label{limit}
Let $\rho_l:\Gamma \rightarrow SL(2,\C)$ be a sequence of representations with decorations $\varphi_l$. Let $\rho_\omega: \Gamma \rightarrow SL(2,\C_\omega)$ be the representation associated to the sequence $\rho_l$. If the decoration $\varphi_\omega$ produced by the sequence $\varphi_l$ is non-degenerate, we have

\[
\beta^\omega_2(\rho_\omega)=\text{$\omega$-$\lim_{l \to \infty}$} \beta_2(\rho_l).
\] 
\end{prop}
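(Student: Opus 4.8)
The plan is to begin from the simplicial description of the $\omega$-Borel invariant derived above. Since the cocycle $B^\omega_2$ is strict, pulling it back along a non-degenerate decoration $\varphi_\omega$ represents the class $(c\circ(i^*_b)^{-1}\circ(\rho_\omega)^*_b)\beta^\omega(2)$, and evaluating on the fundamental class gives
\[
\beta^\omega_2(\rho_\omega)=\sum_{\tilde\sigma\in\tilde\tau}B^\omega_2\bigl(\varphi_\omega(p(\tilde\sigma(e_0))),\ldots,\varphi_\omega(p(\tilde\sigma(e_3)))\bigr).
\]
The strategy is to show that each summand equals the $\omega$-limit of the corresponding classical summand and then to push the $\omega$-limit through the finite sum.

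First I would establish the term-by-term identity. A decoration $\varphi_\omega$ arising from a sequence $\varphi_l$ assigns to each cusp point the class of the sequence $(\varphi_l(\xi))_l$ in $\mathbb{P}^1(\C)_\omega\cong\mathbb{P}^1(\C_\omega)$, so the flags fed into $B^\omega_2$ are classes of sequences of flags. Because $B^\omega_2$ is assembled from $\textup{Vol}^\omega$ via $T^*_3$ (a finite sum over multi-indices), and because Remark~\ref{equality} tells us that $\textup{Vol}^\omega$ agrees with the $\omega$-limit of the ordinary volumes on configurations of distinct points, I would deduce that whenever the $4$-tuple of flags attached to a simplex lies in the non-degenerate range,
\[
B^\omega_2\bigl(\varphi_\omega(p(\tilde\sigma(e_0))),\ldots\bigr)=\text{$\omega$-}\lim_{l\to\infty}B_2\bigl(\varphi_l(p(\tilde\sigma(e_0))),\ldots\bigr).
\]
This is exactly where the hypothesis enters: non-degeneracy of $\varphi_\omega$ guarantees that for every simplex of $\tilde\tau$ the associated $4$-tuple contains at least three distinct points, which is enough to force the corresponding volumes to agree with their $\omega$-limit, the genuinely degenerate cases (where at most two points survive) being precisely those excluded.

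Next, since $\tilde\tau$ consists of finitely many simplices and the $\omega$-limit commutes with finite sums, I would interchange summation and $\omega$-limit to write
\[
\beta^\omega_2(\rho_\omega)=\text{$\omega$-}\lim_{l\to\infty}\sum_{\tilde\sigma\in\tilde\tau}B_2\bigl(\varphi_l(p(\tilde\sigma(e_0))),\ldots,\varphi_l(p(\tilde\sigma(e_3)))\bigr),
\]
and finally recognise the inner sum as the classical Borel invariant $\beta_2(\rho_l)$, using the pullback formula for strict cocycles along boundary maps from~\cite[Corollary 2.7]{burger:articolo} applied to each $\rho_l$ and $\varphi_l$. This yields $\beta^\omega_2(\rho_\omega)=\text{$\omega$-}\lim_{l\to\infty}\beta_2(\rho_l)$.

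The main obstacle is the term-by-term reduction in the first step, and it is entirely a matter of controlling degeneracies. As stressed immediately before the statement, $\textup{Vol}^\omega$ is \emph{not} in general the $\omega$-limit of the standard volumes: when three of the four points coincide the two quantities can differ, because the $\omega$-limit then depends on the chosen representatives. The role of the non-degeneracy assumption is precisely to rule this out on every simplex, so the delicate part of the argument is to verify that ``at least three distinct points'' is exactly the condition under which $\textup{Vol}^\omega$ collapses to the $\omega$-limit of $\textup{Vol}$, after which the remaining manipulations are formal.
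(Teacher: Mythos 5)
Your proof is correct and follows essentially the same route as the paper's: the simplicial formula coming from the strictness of $B^\omega_2$ and \cite[Corollary 2.7]{burger:articolo}, the term-by-term passage to the $\omega$-limit justified by non-degeneracy of $\varphi_\omega$, the interchange of the $\omega$-limit with the finite sum over $\tilde\tau$, and the identification of the inner sum with $\beta_2(\rho_l)$ again via \cite[Corollary 2.7]{burger:articolo}. The only (harmless) imprecision is your claim that ``at least three distinct points'' is \emph{exactly} the threshold for $\textup{Vol}^\omega$ to agree with the $\omega$-limit of the classical volumes: a $4$-tuple formed by two distinct points each repeated twice also yields cross ratio $1$ in $\C_\omega$ and hence a representative-independent vanishing limit, so the non-degeneracy hypothesis is sufficient but not sharp, which does not affect the argument.
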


\begin{cor}
Let $\rho_l:\Gamma \rightarrow SL(2,\C)$ be a sequence of representations with decorations $\varphi_l$. Let $\rho_\omega: \Gamma \rightarrow SL(2,\C_\omega)$ be the representation associated to the sequence $\rho_l$. Suppose $\beta^\omega_2(\rho_\omega)=\textup{Vol}(M)$. If the decoration $\varphi_\omega$ produced by the sequence $\varphi_l$ is non-degenerate, there must exist a sequence $g_l \in SL(2,\C)$ and a representation $\rho_\infty:\Gamma \rightarrow SL(2,\C)$ such that 

\[
\textup{$\omega$-}\lim_{l \to \infty} g_l \rho_l(\gamma) g_l^{-1}=\rho_\infty(\gamma).
\]
\end{cor}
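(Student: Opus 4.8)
The plan is to reduce the statement to the classical volume rigidity theorem by means of Proposition~\ref{limit}. First I would observe that the hypothesis $\beta^\omega_2(\rho_\omega)=\textup{Vol}(M)$, together with the non-degeneracy of $\varphi_\omega$, places us exactly in the situation of Proposition~\ref{limit}, so that
\[
\textup{$\omega$-}\lim_{l \to \infty} \beta_2(\rho_l)=\beta^\omega_2(\rho_\omega)=\textup{Vol}(M).
\]
Since each term obeys the classical bound $\beta_2(\rho_l)\leq \textup{Vol}(M)$, the $\omega$-limit being maximal forces, for every $\varepsilon>0$, the set $\{ l : \beta_2(\rho_l)>\textup{Vol}(M)-\varepsilon \}$ to lie in $\omega$; in other words, $\omega$-almost all of the representations $\rho_l$ have volume arbitrarily close to the maximum.

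Next I would invoke the classical volume (Borel) rigidity theorem for lattices in $PSL(2,\C)$, namely the equality case of~\cite[Theorem 1]{iozzi:articolo}, which asserts that the Borel function on $X(\Gamma,SL(2,\C))$ is continuous for the topology of pointwise convergence and attains the value $\textup{Vol}(M)$ precisely at the class of the holonomy representation $\rho_{\mathrm{hol}}$. The crucial point to establish is that the near-maximal sequence $\rho_l$ cannot escape to an ideal point of the character variety. Here I would argue that a divergent sequence, by the construction of~\cite{parreau:articolo}, would produce a nontrivial isometric $\Gamma$-action on the asymptotic cone $C_\omega(\mathbb{H}^3,d/\lambda_l,O)$ with non-zero length function; such a degeneration forces the straightened simplices appearing in the formula for $\beta^\omega_2(\rho_\omega)$ to collapse, with their $\omega$-cross ratios tending to $\{0,1,\infty\}$, so that $\textup{Vol}^\omega$ drops strictly below $\nu_3$ on the relevant simplices and the total invariant falls below $\textup{Vol}(M)$, contradicting maximality. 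Consequently $\omega$-almost surely the classes $[\rho_l]$ remain in a compact part of $X(\Gamma,SL(2,\C))$ and $\omega$-converge to the holonomy class.

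Finally I would upgrade this convergence of characters to convergence of representations: choosing for each $l$ an element $g_l\in SL(2,\C)$ that conjugates $\rho_l$ into a fixed compact neighborhood of $\rho_{\mathrm{hol}}$ in $\mathrm{Hom}(\Gamma,SL(2,\C))$, the conjugated sequence $g_l\rho_l(\cdot)g_l^{-1}$ is $\omega$-bounded, so that the entrywise limit $\rho_\infty(\gamma):=\textup{$\omega$-}\lim_{l\to\infty} g_l\rho_l(\gamma)g_l^{-1}$ exists in $SL(2,\C)$; compatibility of the $\omega$-limit with products of bounded sequences shows that $\rho_\infty$ is a genuine representation, and continuity of $\beta_2$ yields $\beta_2(\rho_\infty)=\textup{Vol}(M)$, whence $\rho_\infty$ is conjugate to the holonomy. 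The main obstacle is exactly the middle step, ruling out degeneration onto the asymptotic cone, since this is what guarantees that a suitably conjugated sequence stays $\omega$-bounded and that the limit lands in $SL(2,\C)$ rather than genuinely in $SL(2,\C_\omega)$; the remaining steps are formal consequences of Proposition~\ref{limit}, the classical bound, and the rigidity theorem.
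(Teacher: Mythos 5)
Your opening step is exactly the paper's: non-degeneracy plus Proposition~\ref{limit} give $\textup{$\omega$-}\lim_{l\to\infty}\beta_2(\rho_l)=\beta^\omega_2(\rho_\omega)=\textup{Vol}(M)$. At that point, however, the paper concludes in one line by citing \cite[Theorem 1.1]{savini:articolo}, whose statement is precisely the desired conclusion: if the Borel volumes of a sequence of representations $\omega$-converge to $\textup{Vol}(M)$, then there exist $g_l \in SL(2,\C)$ and a representation $\rho_\infty$ with $\textup{$\omega$-}\lim_{l\to\infty} g_l\rho_l(\gamma)g_l^{-1}=\rho_\infty(\gamma)$. You instead attempt to re-derive that theorem, and your derivation has a genuine gap.

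The gap is the middle step, which you yourself flag as ``the main obstacle'': ruling out divergence to an ideal point. This is not a step that can be dispatched by the heuristic you give --- it is the entire content of \cite[Theorem 1.1]{savini:articolo}. Concretely, two things fail. First, the claim that degeneration onto the asymptotic cone forces the $\omega$-cross ratios of the decorated simplices to tend to $\{0,1,\infty\}$ is unproven, and nothing in the paper supplies it: Proposition~\ref{reducible} handles only \emph{reducible} actions (and there concludes $\beta^\omega_2=0$), while an irreducible action with non-trivial length function need not collapse every simplex of the decoration. Second, even granting the collapse, your contradiction does not follow. The invariant $\beta^\omega_2(\rho_\omega)$ is computed as a sum over the tetrahedra $\tilde\sigma\in\tilde\tau$ of a relative triangulation, so a per-simplex estimate $\textup{Vol}^\omega<\nu_3$ only yields a bound of the form $|\beta^\omega_2(\rho_\omega)|< T\nu_3$, where $T$ is the number of tetrahedra; since $T\nu_3\geq\textup{Vol}(M)$ in general (every hyperbolic tetrahedron has volume at most $\nu_3$), this is perfectly compatible with $\beta^\omega_2(\rho_\omega)=\textup{Vol}(M)$. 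The sharp bound $|\beta^\omega_2(\rho_\omega)|\leq\textup{Vol}(M)$ in the paper comes from the transfer map and seminorm considerations, not from summing per-simplex bounds, so ``strictly sub-maximal on each simplex'' does not propagate to ``strictly below $\textup{Vol}(M)$ in total.'' The same objection undercuts your subsequent assertion that near-maximal volume confines the classes $[\rho_l]$ to a compact part of the character variety --- that assertion is, once again, exactly rigidity at ideal points. The intended proof is simply Proposition~\ref{limit} followed by the citation of \cite[Theorem 1.1]{savini:articolo}.
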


\begin{proof}
Thanks to the assumption of non-degenerancy, by applying Proposition~\ref{limit} we desume that $\omega$-$\lim_{\l \to \infty} \beta_2(\rho_l)=\textup{Vol}(M)$. The statement now follows directly by~\cite[Theorem 1.1]{savini:articolo}.
\end{proof}

\begin{oss}
The representation $\rho_\infty$ which appears in the previous corollary as limit of the sequence $\rho_l$ has to be a lift of the standard lattice embedding $i:\Gamma \rightarrow PSL(2,\C)$.
\end{oss} 

Assume that a sequence of representations $\rho_l:\Gamma \rightarrow SL(2,\C)$ diverges to a ideal point of the character variety $X(\Gamma,SL(2,\C))$ and let $\rho_\omega:\Gamma \rightarrow SL(2,\C_\omega)$ be the representation associated to the sequence. Recall that the identification between $SL(2,\C_\omega)$ and $SL(2,\C)_\omega$ implies that the representation $\rho_\omega$ produces in a natural way an isometric action of $\Gamma$ on the asymptotic cone $C_\omega(\mathbb{H}^3,d/\lambda_l,O)$. We are going to restrict our attention to reducible actions with non-trivial length function. We first recall the following

\begin{deft}
Let $\mathcal{T}$ be a real tree on which $\Gamma$ acts via isometries. We say that the action is \textit{reducible} if one of the following holds:

\begin{itemize}
	\item The action of $\Gamma$ admits a global fixed point.
	\item There exists an end $\varepsilon \in \partial_\infty \mathcal{T}$ fixed by $\Gamma$.
	\item There exists a $\Gamma$-invariant line $L \subset \mathcal{T}$. 
\end{itemize}
\end{deft}

\begin{prop}\label{reducible}
Let $\rho_l:\Gamma \rightarrow SL(2,\C)$ be a sequence of representations and suppose it determines a representation $\rho_\omega:\Gamma \rightarrow SL(2,\C_\omega)$ such that the isometric action induced by $\rho_\omega$ on $C_\omega(\mathbb{H}^3,d/\lambda_l,O)$ has non-trivial length function. If the action is reducible then $\beta_2^\omega(\rho_\omega)=0$.
\end{prop}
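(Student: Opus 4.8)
The plan is to lean on the fact, already recorded in the preceding discussion, that $\beta^\omega_2(\rho_\omega)$ is \emph{independent of the chosen decoration}: any $\rho_\omega$-equivariant map $\varphi_\omega:\mathscr{C}(\Gamma)\to\mathscr{F}(2,\C_\omega)=\mathbb{P}^1(\C_\omega)$ produces the representative $\varphi_\omega^*(B^\omega_2)$ of one and the same class in $H^3_b(\Gamma)$, so its pairing with $[N,\partial N]$ is always the same number. Since for $n=2$ the cocycle $B^\omega_2$ is just $\textup{Vol}^\omega$, and $\textup{Vol}^\omega$ vanishes as soon as two of its four entries coincide, it suffices to exhibit a single equivariant decoration whose image consists of at most three points: by the pigeonhole principle every $4$-tuple of its values then has a repetition, so every summand in
\[
\beta^\omega_2(\rho_\omega)=\sum_{\tilde\sigma\in\tilde\tau}B^\omega_2\big(\varphi_\omega(p(\tilde\sigma(e_0))),\ldots,\varphi_\omega(p(\tilde\sigma(e_3)))\big)
\]
is zero. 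Thus the entire problem reduces to constructing a sufficiently degenerate equivariant decoration.

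First I would translate the reducibility hypothesis into a statement on $\mathbb{P}^1(\C_\omega)$ via the identification $\partial_\infty C_\omega(\mathbb{H}^3,d/\lambda_l,O)\cong\mathbb{P}^1(\C_\omega)$, which is compatible with the two $\Gamma$-actions. The non-triviality of the length function rules out the first case of the definition, because a global fixed point forces $\ell(\gamma)=\inf_x d_\omega(x,\gamma x)=0$ for every $\gamma$. Hence either $\Gamma$ fixes an end, i.e.\ a point $p\in\mathbb{P}^1(\C_\omega)$, or $\Gamma$ preserves a line $L$, whose pair of endpoints $\{p,q\}\subset\mathbb{P}^1(\C_\omega)$ is then $\Gamma$-invariant.

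Whenever there is a genuinely fixed point $p\in\mathbb{P}^1(\C_\omega)$ — the fixed-end case, and also the case of an invariant line both of whose endpoints are fixed — I would simply take the constant decoration $\varphi_\omega\equiv p$. This is equivariant precisely because $\rho_\omega(\gamma)p=p$ for all $\gamma$, its image is a single point, and the criterion above gives $\beta^\omega_2(\rho_\omega)=0$ at once.

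The only remaining, and genuinely delicate, possibility is an invariant line whose endpoints are interchanged by some elements of $\Gamma$. This yields a homomorphism $\varepsilon:\Gamma\to\Z/2\Z$ recording the action on $\{p,q\}$, and a cusp group $H_i$ with $\varepsilon(H_i)\neq 0$ fixes no point of $\{p,q\}$, so no decoration valued in $\{p,q\}$ can be defined equivariantly on its orbit; this is the main obstacle. To circumvent it I would pass to the index-two subgroup $\Gamma_0=\ker\varepsilon$, which fixes both $p$ and $q$, and to the associated degree-two cover $M_0\to M$. On $\Gamma_0$ the constant decoration $\equiv p$ is available, so $\beta^\omega_2(\rho_\omega|_{\Gamma_0})=0$ by the fixed-point case; and by multiplicativity of the invariant under finite covers — the fundamental class of $M_0$ pushes forward to twice that of $M$ while the bounded class merely restricts, so that $\beta^\omega_2(\rho_\omega|_{\Gamma_0})=2\,\beta^\omega_2(\rho_\omega)$ — one concludes $\beta^\omega_2(\rho_\omega)=0$. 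The step deserving the most care is exactly this multiplicativity, namely checking that restriction to $\Gamma_0$ in $H^3_b$ is compatible with the transfer of fundamental classes; everything else follows formally from decoration-independence and the vanishing of $\textup{Vol}^\omega$ on degenerate configurations.
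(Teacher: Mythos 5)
Your proof is correct, but it takes a genuinely different route from the paper's. The paper's own proof never touches decorations: it observes that in both reducible cases the image $\rho_\omega(\Gamma)$ is an amenable subgroup of $SL(2,\C_\omega)$ --- contained in a Borel subgroup (hence solvable) when an end is fixed, and acting on the invariant line through a group built from $\R$ and $\Z/2\Z$ when a line is preserved --- so that the pullback $(\rho_\omega)^*_b:H^3_b(SL^\delta(2,\C_\omega))\rightarrow H^3_b(\Gamma)$ vanishes identically, which kills the invariant. Your argument instead combines decoration-independence of $\beta^\omega_2$ with the vanishing of $\textup{Vol}^\omega$ on degenerate $4$-tuples; this is essentially the alternative sketched in the paper's remark that follows the proposition, but you carry it out more carefully than that remark does. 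Indeed, the remark asserts that in the invariant-line case one may freely assign either end of the line to each point of $\mathscr{C}(\Gamma)$; as you correctly point out, no such assignment is $\rho_\omega$-equivariant when the image of some cusp stabilizer swaps the two ends, and your passage to the index-two subgroup $\Gamma_0=\ker\varepsilon$ together with multiplicativity under the double cover $M_0\rightarrow M$ is a genuine repair of that gap. The trade-off is clear: the amenability argument is shorter, treats both cases uniformly, and proves the stronger fact that the entire map $(\rho_\omega)^*_b$ is zero; your argument is more elementary (no amenability, no vanishing theorem for bounded cohomology of amenable groups), but it hinges on the multiplicativity step, which, while standard, is the one piece that should be written out in full --- namely that $N_0=\pi^{-1}(N)$ is a compact core of $M_0$, which is again a complete hyperbolic $3$-manifold with toric cusps so that $\beta^\omega_2(\rho_\omega|_{\Gamma_0})$ is defined, that restriction $H^3_b(\Gamma)\rightarrow H^3_b(\Gamma_0)$ corresponds to the topological pullback $\pi^*$ under the isomorphisms with $H^3_b(N,\partial N)$ and $H^3_b(N_0,\partial N_0)$, and that $\pi_*[N_0,\partial N_0]=2\,[N,\partial N]$.
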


\begin{proof}
Since the length function associated to the action induced by $\rho_\omega$ is non-trivial then the action does not admit a global  fixed point. Moreover, since the action is reducible, it must admit either a fixed end or an invariant line. 
Suppose that there exists an end fixed by $\Gamma$. By~\cite[Proposition 3.20]{parreau:articolo} the asymptotic cone $C_\omega(\mathbb{H}^3,d/\lambda_l,O)$ is naturally identified with the Bass--Serre tree $\Delta^{BS}(SL(2,\C_\omega))$ associated to $SL(2,\C_\omega)$. Hence, there must exist an end of $\Delta^{BS}(SL(2,\C_\omega))$ fixed by the representation $\rho_\omega$. Thus the image $\rho_\omega(\Gamma)$ is a subgroup of a suitable Borel subgroup $N_\omega$ of $SL(2,\C_\omega)$ and hence it is solvable, so amenable by~\cite[Corollary 4.1.7]{zimmer:libro}. This implies that the map $(\rho_\omega)^*_b=0$ from which we conclude $\beta^\omega_2(\rho_\omega)=0$. \\
\indent Suppose now that the action of $\Gamma$ admits an invariant line. This time the image $\rho_\omega(\Gamma)$ will be isomorphic to a subgroup of $\textup{Isom}(\R)$. Being $\textup{Isom}(\R)$ the semidirect group of the two amenable groups $\Z/2\Z$ and $\R$, it will be amenable by~\cite[Proposition 4.1.6]{zimmer:libro}. As before we will have $(\rho_\omega)^*_b=0$, hence $\beta^\omega_2(\rho_\omega)=0$.
\end{proof}

\begin{oss}
Another way to prove Proposition~\ref{reducible} is by using decorations. Indeed, if the action determined by $\rho_\omega$ admits a fixed end $\varepsilon_\omega \in \partial_\infty \Delta^{BS}(SL(2,\C_\omega))$ and since the boundary at infinity can be identified with $\mathbb{P}^1(\C_\omega)$, then the map $\varphi_\omega(\xi)=\varepsilon_\omega$ for $\xi \in \mathscr{C}(\Gamma)$ is a decoration and trivially it results $\beta_2^\omega(\rho_\omega)=0$.\\
\indent In the same way if the action admits an invariant line $L_\omega$, we denote by $\varepsilon_\omega^1$ and $\varepsilon_\omega^2$ the ends of the line $L_\omega$. For every $\xi \in \mathscr{C}(\Gamma)$ we can choose either $\varepsilon_\omega^1$ or $\varepsilon_\omega^2$ as the image of $\xi$ for the decoration $\varphi_\omega$. This implies that every possible choice produces a decoration for $\rho_\omega$ such that it results $\beta^\omega_2(\rho_\omega)=0$.
\end{oss}

Let $S=\{ \gamma_1,\ldots,\gamma_s \}$ be a generating set for the group $\Gamma$. Recall that if a sequence of representations $\rho_l:\Gamma \rightarrow SL(2,\C)$ diverges in the character variety $X(\Gamma,SL(2,\C))$ to an ideal point of the Morgan--Shalen compactification, then the real sequence

\[
\lambda_l:=\inf_{x \in \mathbb{H}^3} \sqrt{ \sum_{i=1}^s d(\rho_l(\gamma_i)x,x)} 
\]
is positive and divergent. As written in~\cite[Theorem 5.2]{parreau:articolo}, for any non-principal ultrafilter $\omega$ on $\N$, by fixing $(\lambda_l)_{l \in N}$ as scaling sequence, we can construct in a natural way a representation $\rho_\omega:\Gamma \rightarrow SL(2,\C_\omega)$ via the representations $\rho_l$.  

\begin{cor}
Let $\rho_l:\Gamma \rightarrow SL(2,\C)$ be a sequence of representations diverging to an ideal point of the Morgan--Shalen compactification of the character variety $X(\Gamma,SL(2,\C))$. Let $\rho_\omega:\Gamma \rightarrow SL(2,\C_\omega)$ be the natural representation determined by the sequence $(\rho_l)_{l \in \N}$. If the representation is reducible, then $\beta^\omega_2(\rho_\omega)=0$. 
\end{cor}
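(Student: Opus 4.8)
The plan is to derive the corollary directly from Proposition~\ref{reducible}. That proposition yields $\beta^\omega_2(\rho_\omega)=0$ from two hypotheses: that the action of $\Gamma$ on $C_\omega(\mathbb{H}^3,d/\lambda_l,O)$ induced by $\rho_\omega$ has non-trivial length function, and that this action is reducible. The second hypothesis is supplied verbatim by the corollary, since the condition that $\rho_\omega$ be reducible means precisely that the induced isometric action on the asymptotic cone is reducible in the sense of the preceding definition. Thus the only point I would need to verify is that divergence to an ideal point of the Morgan--Shalen compactification, together with the prescribed scaling sequence $\lambda_l$, forces the length function of the limit action to be non-trivial.

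To establish this non-triviality I would proceed as follows. First, as recalled just before the statement, divergence of $\rho_l$ to an ideal point guarantees that $\lambda_l=\inf_{x\in\mathbb{H}^3}\sqrt{\sum_{i=1}^s d(\rho_l(\gamma_i)x,x)}$ is positive and divergent, so that it is a legitimate scaling sequence for the asymptotic cone construction. Second, this is exactly the normalization used in the Bestvina--Paulin and Morgan--Shalen rescaling procedure: it is designed so that the rescaled translation lengths $\gamma\mapsto \ell_l(\gamma)/\lambda_l$, where $\ell_l(\gamma)$ denotes the translation length of $\rho_l(\gamma)$ on $\mathbb{H}^3$, converge along $\omega$ to a projective length function $\ell_\omega$ that is, by the defining property of an ideal point, not identically zero. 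By~\cite[Theorem 5.2]{parreau:articolo} this $\ell_\omega$ is precisely the length function of the $\Gamma$-action on $C_\omega(\mathbb{H}^3,d/\lambda_l,O)$, identified with the Bass--Serre tree $\Delta^{BS}(SL(2,\C_\omega))$. Hence the limit action has non-trivial length function, both hypotheses of Proposition~\ref{reducible} are met, and I would simply invoke that proposition to conclude $\beta^\omega_2(\rho_\omega)=0$.

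The step I expect to be the main obstacle is the non-triviality of $\ell_\omega$, and the subtlety there is that non-triviality is a property of the whole group $\Gamma$ rather than of the generating set: even when every generator $\gamma_i$ becomes elliptic in the limit, some word in the $\gamma_i$ may still have positive translation length. For this reason the conclusion cannot be read off from a single displacement estimate at the minimizing basepoint, but must instead be extracted from the Morgan--Shalen characterization of ideal points as non-zero projective length functions, which is exactly what the cited Theorem~5.2 of~\cite{parreau:articolo} encodes. Once this identification is secured, the remainder is a direct application of the previous proposition.
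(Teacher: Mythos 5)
Your proposal is correct and follows essentially the same route as the paper: the paper's proof is exactly the observation that Proposition~\ref{reducible} applies because the limit action has non-trivial length function, this non-triviality being a consequence of the divergence of the sequence $\rho_l$. Your write-up merely fills in the justification that the paper leaves implicit (the Morgan--Shalen characterization of ideal points as non-zero projective length functions together with Parreau's identification of that limit with the length function of the action on $C_\omega(\mathbb{H}^3,d/\lambda_l,O)$), and your remark that non-triviality cannot be deduced from a displacement bound on the generators alone is a sound observation about why the citation is genuinely needed.
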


\begin{proof}
It follows directly from Proposition~\ref{reducible} by obsverving that the $\rho_\omega$ has non-trivial length function since it is associated to diverging sequence of representations. 
\end{proof}



\addcontentsline{toc}{chapter}{\bibname} 	

\vspace{20pt}
Alessio Savini\\
Department of Mathematics,\\
University of Bologna,\\
Piazza di Porta San Donato 5,\\
40126 Bologna,\\
Italy\\
alessio.savini5@unibo.it\\

\end{document}